\title{Fixed Point Sets and the Fundamental Group I: Semi-free Actions on $G$-CW-Complexes}
\author{Sylvain Cappell \\
Courant Institute of Mathematical Sciences, New York University \\
\\
Shmuel Weinberger\thanks{Research was partially supported by NSF grants DMS 0504721 and 0805913} \\
University of Chicago \\
\\
Min Yan\thanks{Research was supported by Hong Kong RGC General Research Fund 16319116.} \\ Hong Kong University of Science and Technology}
\newcommand{\sub}{\subset}
\newcommand{\mc}{\mathcal}
\newcommand{\bb}{\mathbb}
\newtheorem{theorem}{Theorem}
\newtheorem*{theorem*}{Theorem}
\newtheorem*{mtheorem*}{Main Theorem}
\newtheorem{lemma}[theorem]{Lemma}
\newtheorem{proposition}[theorem]{Proposition}
\theoremstyle{definition}
\newtheorem*{definition*}{Definition}
\newtheorem*{case*}{Case}
\newtheorem*{subcase*}{Subcase}
\newtheorem{example}{Example}
\numberwithin{equation}{section}
\begin{document}

\maketitle

\begin{abstract}
Smith theory says that the fixed point of a semi-free action of a group $G$ on a contractible space is ${\bb Z}_p$-acyclic for any prime factor $p$ of $G$. Jones proved the converse of Smith theory for the case $G$ is a cyclic group acting on finite CW-complexes. We extend the theory to semi-free group action on finite CW-complexes of given homotopy type, in various settings. In particular, the converse of Smith theory holds if and only if certain $K$-theoretical obstruction vanishes. We also give some examples that show the effects of different types of the $K$-theoretical obstruction.
\end{abstract}

\section{Introduction}

The homological theory of group actions began with the results of P.A. Smith \cite{smith} that, if $G$ is a $p$-group acting on a contractible space, then the fixed set is ${\bb Z}_p$-acyclic. While originally the connection between the order of the group and the nature of acyclicity seemed like an artifact of the proof, it was quickly realized that this was not the case.

The definitive refutation of this was the result of L. Jones that any ${\bb Z}_n$-acyclic finite complex is the fixed set of semi-free ${\bb Z}_n$-action on a finite contractible complex \cite{jones}. Here we recall that a group action is semi-free if all isotropy groups are either trivial or the whole group. If one removes the semi-free condition, then R. Oliver's work \cite{oliver} shows that the necessary and sufficient condition for $F$ to be a fixed set is that the Euler characteristic $\chi(F)=1$. Incidentally, this is not necessary for general topological actions, but it is for the so called ANR-actions, which is somewhat weaker than having the equivariant homotopy type of a finite $G$-complex.

This paper and a companion one \cite{cwy-oliver} study the extensions of the work of Jones and Oliver to nonsimply connected spaces. The simply connected theory was interestingly explored by Assadi \cite{assadi} and Oliver-Petrie \cite{op}, and is largely understood. Both theories depend on a kind of ``equivariant surgery'' and involve $K_0$. Assadi-Vogel \cite{av} developed a non-simply connected semi-free theory for actions on manifolds (therefore only for certain restricted family of groups). Our work extends theirs, in the situation of finite complexes acted upon by many more finite groups.

In this paper, we will see that, even for $G={\bb Z}_p$ there are a rich set of phenomena visible in trying to understand the homotopy types of fixed sets, in contrast to the situation for non-$p$-groups. The main results of \cite{cwy-oliver} show that the Euler characteristic conditions do not become substantially more subtle in the presence of the fundamental group.

\begin{theorem*}
For a finite complex $X$ and a cyclic group $G={\bb Z}_n$, $n$ not prime power, the following are equivalent:
\begin{enumerate}
\item There is a fixed point free finite $G$-complex $Y$ with a $G$-map $Y\to X$ which is a homotopy equivalence.
\item $\chi(X)=0$.
\end{enumerate}
\end{theorem*}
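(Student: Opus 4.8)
For the implication $(1)\Rightarrow(2)$, which in fact holds for every $n$, I would run a transfer-and-Lefschetz argument. If $f\colon Y\to X$ is a $G$-map into the trivially-acted-on $X$ and is a (non-equivariant) homotopy equivalence, then $f$ is constant on $G$-orbits, so it factors as $Y\xrightarrow{\,p\,}Y/G\xrightarrow{\,\bar f\,}X$. Since $f^{*}=p^{*}\bar f^{*}$ is an isomorphism on rational cohomology, $p^{*}\colon H^{*}(Y/G;\bb Q)\to H^{*}(Y;\bb Q)$ is surjective; but $p^{*}$ is always injective with image $H^{*}(Y;\bb Q)^{G}$ (the standard averaging argument for a finite group acting on a finite complex), so the $G$-action on $H^{*}(Y;\bb Q)$ is trivial. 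Consequently, for a generator $t$ of $G=\bb Z_{n}$ the Lefschetz number satisfies $L(t)=\sum_{i}(-1)^{i}\dim_{\bb Q}H^{i}(Y;\bb Q)=\chi(Y)=\chi(X)$; on the other hand $t$ has finite order and $Y$ is a finite $G$-CW complex, whence $L(t)=\chi(Y^{\langle t\rangle})=\chi(Y^{G})=\chi(\emptyset)=0$, giving $\chi(X)=0$.

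The content is the converse $(2)\Rightarrow(1)$, where the hypothesis that $n$ is not a prime power is essential — for $G=\bb Z_{p}$ one already meets genuine further obstructions (for instance $\bb Z_{2}$ and $X=S^{1}$ admit no such $Y$ although $\chi(S^{1})=0$, since the $G$-map would have to factor a degree-two cover through a homotopy equivalence of circles). My plan is an equivariant obstruction-theory construction in the style of Oliver's realization of fixed-point-free actions. Starting from a finite $G$-CW complex over $X$ — for instance $Y_{0}=X\times G$ with $G$ permuting the second factor, together with the projection to $X$ — I would successively attach $G$-cells, crucially allowing cells whose isotropy is a \emph{proper} (possibly nontrivial) subgroup of $G$, so as to make $Y_{k}\to X$ first $1$-connected and $\pi_{1}$-iso and then highly connected, killing the relative homotopy groups through the middle range. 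The extra room afforded by allowing proper isotropy — rather than insisting on a free action — is exactly what is needed: since only the global $G$-fixed set must be empty, and since $n$ is not a prime power, for a coprime factorization $n=ab$ with $a,b>1$ one can localize $\bb Z_{a}$-isotropy cells and $\bb Z_{b}$-isotropy cells in disjoint regions, so that Smith theory no longer forces a $G$-fixed point to appear.

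The obstruction to completing this tower with a \emph{finite} complex is $K$-theoretic, as announced in the abstract: the ``induced'' cells of proper isotropy contribute through reduced projective class groups of $\bb Z[G]$ and of group rings of proper subgroups, and the Euler characteristic of the output is constrained only by the value ($0$) forced by the Lefschetz computation above. The crux — and the step I expect to be the main obstacle — is to show this obstruction vanishes: here I would invoke Oliver's computations of the relevant (co)kernels, which precisely when $G$ is not of prime power order show that the realizable obstructions fill the entire group, so that once $\chi(X)=0$ the construction can be carried to completion, yielding a finite fixed-point-free $G$-complex $Y$ with $Y\to X$ a homotopy equivalence. The remaining technical burden is threefold: keeping the underlying space homotopy equivalent to $X$ while the equivariant cell structure is modified; maintaining $Y^{G}=\emptyset$ at every stage; and identifying the obstruction sharply enough to feed it into the Oliver-type vanishing theorem — I would isolate this last point as a separate lemma on $\bb Z[G]$-modules induced from proper subgroups.
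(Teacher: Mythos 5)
Your argument for $(1)\Rightarrow(2)$ is fine and is essentially the paper's: necessity is the Lefschetz fixed point theorem (the factorization through $Y/G$ and the transfer give triviality of the $G$-action on $H^*(Y;{\bb Q})$, so $L(t)=\chi(X)$, which must vanish since $Y^G=\emptyset$). The problem is the converse, which is the actual content of the theorem, and there your proposal has a genuine gap rather than a proof. Everything is deferred to the final step, ``show this obstruction vanishes \dots invoke Oliver's computations of the relevant (co)kernels,'' and that step does not go through as stated. Once $\pi_1X=\pi$ is nontrivial, the finiteness/extension obstruction for your tower does not live in $\tilde K_0({\bb Z}[G])$ or in groups Oliver computed: it lives in $\tilde K_0$ of ${\bb Z}[\pi\times G]$ (more precisely in orbit-category $K$-theory keeping track of the isotropy types you allow), and these groups contain Bass--Heller--Swan summands such as $NK_0({\bb Z}[G])$ and $H_*(\pi;K_{-1}({\bb Z}[G]))$ which are nonzero exactly in the situations at issue; indeed the main theorems and Propositions \ref{eg1}--\ref{eg2} of this paper exhibit maps satisfying the Smith condition whose obstructions are nonzero elements of precisely these summands. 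So ``the realizable obstructions fill the entire group'' is not something Oliver's (simply connected, contractible-target) computations give you, and you never actually exhibit where the hypothesis $\chi(X)=0$ enters to kill the class you produce. You also do not address finiteness bookkeeping for cells of proper isotropy (the relative chains are modules over the orbit category, not free ${\bb Z}[\pi\times G]$-modules), which is where the ``localize ${\bb Z}_a$- and ${\bb Z}_b$-isotropy in disjoint regions'' heuristic would have to be made precise.

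By contrast, the paper does not prove sufficiency by confronting this $K$-theory at all: it is proved in the companion paper \cite{cwy-oliver} by a series of purely geometric constructions built on Oliver's realization results, arranged so that, as the paper emphasizes, ``the fundamental group of $X$ does not enter.'' That remark is exactly the point your plan misses: a head-on obstruction-theoretic attack over ${\bb Z}[\pi\times G]$ runs into the nontrivial Nil and $K_{-1}$ phenomena that this paper is devoted to, whereas the intended proof sidesteps them. (A small further caution: your parenthetical that for $G={\bb Z}_2$, $X=S^1$ no such $Y$ exists is true, but the quick ``degree-two cover'' reason is not sufficient for an arbitrary fixed-point-free $Y$; the clean argument is Smith theory applied to the ${\bb Z}_2$-action lifted to the contractible infinite cyclic cover.) To repair your approach you would need either to prove that your obstruction can always be pushed into the image of the classes realizable by Oliver-type pieces with trivial $\pi$ (a statement that is genuinely nontrivial and nowhere justified in the proposal), or to abandon the $K$-theoretic formulation in favor of the geometric construction actually used in \cite{cwy-oliver}.
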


The necessity of the condition is a consequence of the Lefschetz fixed point theorem, and therefore also holds for the $G$-ANR case. Indeed, the theorem is the reason the condition $\chi(F)=1$ arises in Oliver's classification of possible fixed sets of ${\bb Z}_n$-actions on contractible complexes. The sufficiency in \cite{cwy-oliver} builds on Oliver's work by a series of purely geometric constructions; for our purposes, we remark that the fundamental group of $X$ does not enter.

In contrast to the generalization of Oliver's theorem, an analysis of semi-free actions shows a number of interesting phenomena. We will mention some examples before describing the theorems.

\begin{example}\label{example1}
Let $T(r)$ be the mapping torus of a degree $r$ map from a sphere $S^d$ to itself. Notice that the map $T(r)\to S^1$ is a ${\bb Z}_n$-homology equivalence if and only if $n$ divides a power of $r$ (i.e., all the primes in $n$ occur in $r$). The infinite cyclic cover has nontrivial ${\bb Q}$-homology, but is also  ${\bb Z}_n$-acyclic under this divisibility condition.

We will see that there is a semi-free ${\bb Z}_n$-action on a finite complex homotopy equivalent to $S^1$ with fixed set $T(r)$ iff $n|r$. When $n$ is not square-free, this condition goes beyond Smith theory. It is related to $\tilde{K}_0({\bb Z}[{\bb Z}\times{\bb Z}_n])$, and the role of square-free is well known to be the condition for Nil to be nontrivial in the Bass-Heller-Swan formula (see Bass-Murthy \cite{bm}). Concretely, $T(p)$ is fixed under ${\bb Z}_p$-action, but not a semi-free ${\bb Z}_{p^2}$-action. Its two fold cover $T(p^2)$ is fixed under a semi-free ${\bb Z}_{p^2}$-action, but not a semi-free ${\bb Z}_{p^3}$-action, etc.
\end{example}

If one studies topological actions that are locally smooth, one does not necessarily obtain a finite $G$-complex \cite{quinn1,steinberger,we1}. The non-uniqueness of such structures, even when they exist, is implicated in the phenomenon of nonlinear similarity of linear actions on the sphere \cite{cs}. In the above examples one can obtain a locally smooth action (or equivalently a $G$-ANR action) with $T(r)$ as fixed set if and only if one can construct a $G$-action on finite complex. The following example shows a difference between these categories.

\begin{example}
Let $T(r_1,r_2)$ be the double mapping torus, obtained by glueing two ends of $S^d\times [0,1]$ to a copy of $S^d$ by maps of degrees $r_1,r_2$. Then $T(2,3)$ is ${\bb Z}_6[{\bb Z}]$-acylic. It is the fixed set of a semi-free ${\bb Z}_6$-ANR action. On the other hand, it is not the fixed set (up to homotopy) of any finite $G$-complex homotopy equivalent to the circle $S^1$.

In this case, the obstructions are nontrivial elements of $K_{-1}({\bb Z}[{\bb Z}_6])$ that enter via the Bass-Heller-Swan formula into the obstruction group $\tilde{K}_0({\bb Z}[{\bb Z}\times {\bb Z}_6])$; there are similar examples arising for all non-prime-power groups. 
\end{example}

One of the reasons to focus so strongly on the case of the circle is the special role that it plays in the Farrell-Jones conjecture \cite{fj}. The circle is central to this problem because, as we shall soon explain more systematically, the examples on the circle can be promoted to examples on any finite complex whose fundamental group is a torsion free hyperbolic group, or a lattice.

Assuming the Farell-Jones conjecture, if the fundamental group is torsion free, there would not be any example of fixed sets obstructed for ${\bb Z}_p$-actions when the Smith condition holds. To give an example where there is an obstruction, we turn to finite fundamental group.

\begin{example}
Let $f\colon L_3(kp) \to L_3(p)$ be a degree $r$ map of three dimensional lens spaces, with $k,p$ coprime. There is a ${\bb Z}_p$-action on a space of the homotopy type of $L_3(p)$, such that the inclusion map from the fixed set is homotopic to $f$, if and only if $r^{p-1}=k^{p-1}$ mod $p^2$. The details are in Proposition \ref{lens}.
\end{example}

We now state the results from which the above examples follow.

Let $G$ be a group. A $G$-map between finite $G$-complexes (or compact $G$-ANRs) is a {\em pseudo-equivalence} if it is a homotopy equivalence without considering the group action. Given a $G$-map $f\colon F\to Y$, we ask whether it is possible to extend $F$ to a bigger $G$-space $X$, and extend $f$ to a pseudo-equivalent $G$-map $g\colon X\to Y$. We call $g$ a {\em pseudo-equivalence extension} of $f$.

In this paper, we concentrate in the following setting. The group $G$ is finite, and all spaces are finite semi-free $G$-complexes. Moreover, we only consider $F=X^G$ in the pseudo-equivalence extension. In other words, the extension from $F$ to $X$ is obtained by attaching free $G$-cells.

For the special case $Y$ is a point, the question of the existence of a pseudo-equivalence extension becomes whether a given space $F$ can be the fixed point set of a semi-free $G$-action on a contractible space $X$. The classical results of Smith \cite{smith} and Jones \cite{jones} give necessary and sufficient condition for semi-free actions by cyclic groups.

\begin{theorem*}[Smith and Jones]
A finite complex $F$ is the fixed set of a finite contractible semi-free ${\bb Z}_n$-complex if and only if $\tilde{H}_*(F;{\bb Z}_n)=0$.
\end{theorem*}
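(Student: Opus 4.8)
The plan is to prove the two implications separately; the ``only if'' is classical Smith theory, the ``if'' is a direct equivariant cell attachment.

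For necessity, note first that for a finite complex $F$ the condition $\tilde H_*(F;{\bb Z}_n)=0$ is equivalent, by universal coefficients and the Chinese remainder decomposition of ${\bb Z}_n$, to the statement that each $\tilde H_i(F;{\bb Z})$ is a finite group of order prime to $n$ (in particular $F$ is connected and nonempty), and also to $\tilde H_*(F;{\bb Z}_p)=0$ for every prime $p\mid n$. Now let $X$ be a finite contractible semi-free ${\bb Z}_n$-complex with $X^{{\bb Z}_n}=F$. For each prime $p\mid n$ the restriction of the action to the subgroup ${\bb Z}_p\subset{\bb Z}_n$ is again semi-free with the same fixed point set $F$; since $X$ is contractible it is ${\bb Z}_p$-acyclic, so the classical Smith theorem for the $p$-group ${\bb Z}_p$ gives that $X^{{\bb Z}_p}=F$ is ${\bb Z}_p$-acyclic. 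Letting $p$ range over the primes dividing $n$ yields $\tilde H_*(F;{\bb Z}_n)=0$.

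For sufficiency, assume $\tilde H_*(F;{\bb Z}_n)=0$, so $F$ is a connected finite complex with each $A_i:=\tilde H_i(F;{\bb Z})$ finite of order prime to $n$. Fix a large $N$ and let $S=S^{2N-1}$ carry a free linear ${\bb Z}_n$-action; form the join $Z=F*S$, with ${\bb Z}_n$ acting trivially on $F$ and freely on $S$. Then $Z$ is a finite semi-free ${\bb Z}_n$-complex with $Z^{{\bb Z}_n}=F$, it is $2N$-connected, and $\tilde H_k(Z;{\bb Z})\cong A_{k-2N}$ with the trivial ${\bb Z}_n$-action (a finite group of order prime to $n$). It now suffices to attach free ${\bb Z}_n$-cells to $Z$, all in dimensions $\ge 2N+1$, so as to make the result $X$ acyclic: such cells do not affect $\pi_1$ and do not enlarge the fixed set, so $X$ is then a finite contractible semi-free ${\bb Z}_n$-complex with $X^{{\bb Z}_n}=F$.

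The crux is an algebraic fact, with $R={\bb Z}[{\bb Z}_n]$: every finite abelian group $A$ with trivial ${\bb Z}_n$-action and $\gcd(|A|,n)=1$ has a finite free resolution over $R$, indeed one of length one, $0\to R^b\xrightarrow{\ \phi\ }R^b\to A\to 0$. First, $\mathrm{pd}_R A\le 1$: $A$ is supported only at primes $\ell\nmid n$, and at such $\ell$ the localization $R_{(\ell)}={\bb Z}_{(\ell)}[{\bb Z}_n]$ is unramified over ${\bb Z}_{(\ell)}$, hence a finite product of semilocal Dedekind (so principal ideal) domains of global dimension one. Second, the first syzygy $P=\ker(R^b\twoheadrightarrow A)$ is a finitely generated projective whose class in $\tilde K_0(R)$ — the only obstruction to realizing $\phi$ by a square matrix — vanishes: $P$ is locally free everywhere, it is actually free over $R_{(\ell)}$ at every prime $\ell\mid n$ (there $A$ localizes to $0$), and it becomes free over the maximal order $\prod_{d\mid n}{\bb Z}[\zeta_d]$ because the trivial action forces $A$ to be concentrated in the trivial-representation ($d=1$) factor, so after tensoring $P$ is $\ker({\bb Z}^b\twoheadrightarrow A)$ times a free module; a class trivial over the maximal order and at all primes dividing $n$ is trivial in $\tilde K_0(R)$. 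Granting this, let $Q^{(d)}_\bullet=(0\to R^{b_d}\xrightarrow{\phi_d}R^{b_d}\to 0)$ resolve $A_d$, placed in chain degrees $2N+d+2,\,2N+d+1$, and set $D_\bullet=\bigoplus_d Q^{(d)}_\bullet$, a bounded complex of finitely generated free $R$-modules with $H_*(D_\bullet)=\bigoplus_d A_d$ in the degrees matching $\tilde H_{*-1}(Z;{\bb Z})$. Since $Z$ is $2N$-connected while $D_\bullet$ lives in degrees $\ge 2N+1$, we are in the stable range and can realize $D_\bullet$ as the relative cellular chain complex $C_*(X,Z;{\bb Z})$ of an equivariant attachment of free ${\bb Z}_n$-cells (each basis element of $D_k$ attached as a free ${\bb Z}_n$-orbit of $k$-cells with boundary representing the prescribed class of $Z$, possible by Hurewicz, equivariantly because ${\bb Z}_n$ acts trivially on the relevant homology of $Z$); the long exact sequence of $(X,Z)$ then gives $\tilde H_*(X;{\bb Z})=0$. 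The step I expect to be the main obstacle is precisely the vanishing of this $\tilde K_0$-obstruction: a general finite $R$-module need not admit a finite free resolution, and it is exactly the coprimality $\gcd(|\tilde H_*(F;{\bb Z})|,n)=1$ — encoded in $\tilde H_*(F;{\bb Z}_n)=0$ — that puts the homology away from the primes at which ${\bb Z}[{\bb Z}_n]$ fails to be regular and so kills the obstruction, the same mechanism by which genuine $K$-theoretic obstructions reappear for the non-contractible targets of the introduction.
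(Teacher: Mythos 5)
Your ``only if'' direction is fine, and your reduction of the ``if'' direction is the right one: after joining with a free odd sphere, everything comes down to showing that a finite abelian group $A$ of order prime to $n$, with trivial ${\bb Z}_n$-action, admits a finite \emph{free} resolution over $R={\bb Z}[{\bb Z}_n]$; your projective-dimension-one argument via localization at primes $\ell\nmid n$ is also correct. But the step you yourself flag as the crux --- the vanishing of $[P]\in\tilde K_0(R)$ --- is argued incorrectly, and this is a genuine gap, not a fixable detail. The class $[R^b]-[P]$ is (by d\'evissage on $A$) a sum of Swan classes $\sigma(r)=[(r,\Sigma_G)]-[R]$, and Swan classes \emph{always} satisfy the two conditions you use: every finitely generated projective over ${\bb Z}G$ is locally free at every prime (so ``free over $R_{(\ell)}$ for $\ell\mid n$'' carries no information), and Swan modules become free over the maximal order, so these classes lie in the kernel subgroup $D({\bb Z}G)=\ker(\tilde K_0({\bb Z}G)\to\tilde K_0(\Gamma))$. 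Your criterion ``trivial over the maximal order and at all primes dividing $n$ implies trivial in $\tilde K_0$'' is false precisely because $D({\bb Z}G)$ is in general nonzero and contains the Swan subgroup; if it were valid, it would prove the Swan homomorphism vanishes for every finite abelian group, contradicting, e.g., Ullom's computation quoted in Proposition \ref{lens} of this paper (the Swan subgroup of ${\bb Z}[{\bb Z}_p\times{\bb Z}_p]$ is a nontrivial ${\bb Z}_p$), and indeed would trivialize the $K$-theoretic obstructions that are the whole point of the paper --- note that nothing in your maximal-order computation used cyclicity of the acting group.

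What is actually needed at this point is Swan's theorem (Corollary 6.1 of \cite{swan}) that the Swan homomorphism is zero for cyclic groups, equivalently that the Swan modules $(r,\Sigma_G)$ are free over ${\bb Z}[{\bb Z}_n]$ for $r$ prime to $n$; this is a genuinely arithmetic fact about cyclic group rings, not a formal consequence of local and maximal-order triviality. This is also how the present paper handles the issue: the theorem is quoted from Smith and Jones, and within the paper's own framework (Theorem \ref{trivialtarget} specialized to $Y$ a point) the residual obstruction is exactly a Swan class, killed for cyclic $G$ by citing Swan, as in the proof of Theorem \ref{fj1}. So either invoke Swan's Corollary 6.1 (or reproduce its proof that $(r,\Sigma)$ is free for cyclic $G$) at your crux step, or fall back on Jones's original geometric construction; as written, the sufficiency proof does not go through.
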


For a general semi-free action of $G$ on contractible $X$, and any prime factor $p$ of $|G|$, the fixed set $F=X^G$ is the same as the fixed set $X^C$ of a cyclic subgroup of order $p$. Then the homological condition in the theorem becomes $\tilde{H}_*(F;{\bb F}_p)=0$ for all prime factors $p$ of $G$ (${\bb F}_p={\bb Z}_p$ is a field for prime $p$). This is equivalent to $\tilde{H}_*(F;{\bb Z}_{|G|})=0$. We call this the {\em Smith condition}.

In general, we let $Y$ be a connected semi-free $G$-complex. Let $\tilde{Y}$ be the universal cover of $Y$, with action by the fundamental group $\pi=\pi_1Y$. Then all actions on $\tilde{Y}$ covering $G$-actions on $Y$ form a group $\Gamma$ that fits into an exact sequence
\[
1\to \pi\to \Gamma\to G\to 1.
\]
In particular, if $G$ acts trivially on $Y$, then $\Gamma=\pi\times G$. 

Suppose a $G$-map $g\colon X\to Y$ is a pseudo-equivalence between semi-free $G$-complexes. Then the mapping cone of $g$ is a contractible semi-free $G$-complex, and Smith condition can be applied to the mapping cone to give isomorphisms $\tilde{H}_*(X^G;{\bb F}_p)\cong \tilde{H}_*(Y^G;{\bb F}_p)$ for all prime factors $p$ of $|G|$. In fact, in Section \ref{scondition}, we apply the Smith condition to the universal cover and get isomorphisms $\tilde{H}_*(X^G;{\bb F}_p\pi)\cong \tilde{H}_*(Y^G;{\bb F}_p\pi)$. This is the necessary Smith condition for constructing pseudo-equivalence extension. 

However, it turns out that there is also an algebraic $K$-theoretic obstruction. The following is our first main result, for the case the $G$-action on $Y$ is trivial. 

\begin{theorem}\label{trivialtarget}
Suppose $f\colon F\to Y$ is a map of finite complexes, with $Y$ connected and $\pi=\pi_1Y$. Then $F$ can be the fixed set of a finite semi-free $G$-complex $X$, and $f$ has pseudo-equivalence extension $g\colon X\to Y$, if and only if the following are satisfied.
\begin{enumerate}
\item The map $f$ induces isomorphisms $\tilde{H}_*(F;{\bb F}_p\pi)\cong \tilde{H}_*(Y;{\bb F}_p\pi)$ for all prime factors $p$ of $|G|$. 
\item An obstruction $[C_*(\tilde{f})]\in \tilde{K}_0({\bb Z}[\pi\times G])$ vanishes.
\end{enumerate}
\end{theorem}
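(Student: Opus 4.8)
The plan is to realize $X$ by attaching free $G$-cells to $F$, and to turn the question of when the resulting $G$-map $g\colon X\to Y$ can be made a pseudo-equivalence into a chain-level problem over the group ring ${\bb Z}[\pi\times G]$. Working on universal covers, write $C_*(\tilde F)$ and $C_*(\tilde Y)$ for the cellular chain complexes over ${\bb Z}\pi$, and let $C_*(\tilde f)$ be the algebraic mapping cone of the induced chain map; this is a finite complex of ${\bb Z}\pi$-modules. Since $G$ acts trivially on $Y$ and we only attach free $G$-cells over $F$, building $X$ amounts to choosing, for the free part, a finite complex of free ${\bb Z}[\pi\times G]$-modules $D_*$ together with a chain equivalence (after forgetting the $G$-action, i.e. after applying ${\bb Z}[\pi\times G]\otimes_{{\bb Z}[\pi\times G]}{\bb Z}\pi$ — concretely, augmenting $G$ to a point) between $D_*$ and $C_*(\tilde f)\otimes_{{\bb Z}\pi}{\bb Z}[\pi\times G]$ in the appropriate sense. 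The upshot is that a pseudo-equivalence extension exists iff the ${\bb Z}[\pi\times G]$-chain complex obtained from $C_*(\tilde f)$ by extension of scalars is {\em chain equivalent to a finite complex of free (equivalently stably free) ${\bb Z}[\pi\times G]$-modules}, which is exactly the vanishing of a finiteness obstruction in $\tilde K_0({\bb Z}[\pi\times G])$ — this is the class $[C_*(\tilde f)]$ in condition (2).

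The first step is to check that condition (1) is exactly what is needed for this finiteness obstruction to be {\em defined}: one must know that $C_*(\tilde f)$, after extending to ${\bb Z}[\pi\times G]$ and then localizing/completing appropriately, is chain-homotopy-equivalent to a complex that is ``finitely dominated'' as a ${\bb Z}[\pi\times G]$-complex. Here the Smith condition enters: over ${\bb F}_p[\pi\times G]$ the complex $C_*(\tilde f)\otimes{\bb F}_p$ must be acyclic so that — after a Maschke-type / idempotent argument splitting ${\bb Z}[\pi\times G]$ along the prime factors of $|G|$, using that $G$ acts trivially and that ${\bb Z}[\pi\times G]={\bb Z}\pi[G]$ — the complex becomes acyclic away from the "singular" primes and finitely dominated over the whole ring. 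The mechanism is exactly the one sketched in the excerpt preceding the theorem: apply Smith theory to the mapping cone of $g$ on the universal cover to force $\tilde H_*(X^G;{\bb F}_p\pi)\cong\tilde H_*(Y^G;{\bb F}_p\pi)$, and conversely show that once (1) holds the chain complex $C_*(\tilde f)$ has a well-defined class in $\tilde K_0({\bb Z}[\pi\times G])$ (one first makes it finite-dimensional and projective over each relevant localization, then assembles).

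For sufficiency, assuming (1) and that the class vanishes: the vanishing gives a finite free ${\bb Z}[\pi\times G]$-complex $D_*$ chain equivalent to $C_*(\tilde f)$ (extended); realize $D_*$ geometrically by attaching free $G$-cells to $F$ to build $X$, using that $D_*$ is free with a preferred basis in each degree and that $\pi_1$ issues are controlled because the cells are free over $G$ and attached to the connected complex $F$ (standard handle-trading / Wall realization, equivariantly). Then the map $g\colon X\to Y$ built from $f$ and the null-homotopy data is a ${\bb Z}\pi$-homology equivalence on cones, and by a Hurewicz/Whitehead argument (after checking $\pi_1$ is an iso, which again follows because we only attached free cells in dimensions $\ge 2$ or handled $\pi_1$ by hand) it is a pseudo-equivalence. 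For necessity, given such a $g$, the mapping cone $C(\tilde g)$ over ${\bb Z}[\pi\times G]$ is a finite free contractible-after-forgetting-$G$ complex with $X^G=F$, so comparing it with $C_*(\tilde f)$ shows $[C_*(\tilde f)]=0$ and Smith theory on the cone gives (1). The main obstacle I expect is the bookkeeping in the middle step: precisely identifying the target of the finiteness obstruction as $\tilde K_0({\bb Z}[\pi\times G])$ (rather than some localized or relative $K$-group), i.e. showing that the naive obstruction living a priori in a relative term collapses to the reduced projective class group of the integral group ring — this is where the interplay between the Smith condition (controlling the complex at the primes dividing $|G|$) and the freeness over ${\bb Z}\pi$ (controlling it rationally and at the other primes) has to be orchestrated carefully, and it is the technical heart that Example \ref{example1} (via Bass--Heller--Swan and Nil) shows is genuinely nontrivial.
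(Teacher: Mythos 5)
Your overall strategy is the paper's: build $X$ by attaching free $G$-cells to $F$, recast the existence of a pseudo-equivalence extension as an equivariant Wall finiteness problem, use the Smith condition to make the class defined, and get necessity from the mapping cone. But there is a genuine error in your algebraic formulation of the obstruction. You describe the relevant ${\bb Z}[\pi\times G]$-complex as ``$C_*(\tilde f)\otimes_{{\bb Z}\pi}{\bb Z}[\pi\times G]$'', i.e.\ extension of scalars, and you ask for a chain equivalence to a free complex ``after forgetting the $G$-action''. Taken literally this trivializes the problem: since the terms of $C_*(\tilde f)$ are finitely generated free ${\bb Z}\pi$-modules, the induced complex is already a finite complex of free ${\bb Z}[\pi\times G]$-modules, so its class in $\tilde K_0({\bb Z}[\pi\times G])$ is always zero, and a comparison made only after forgetting $G$ carries no equivariant information at all — contradicting the paper's examples (e.g.\ $T(p)\to S^1$ for ${\bb Z}_{p^2}$) where the obstruction is nonzero. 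The correct object is $C_*(\tilde f)$ regarded as a ${\bb Z}[\pi\times G]$-complex with \emph{trivial} $G$-action (restriction along $\pi\times G\to\pi$, not induction): in the cone of $\tilde g$ the chains of $\tilde F$ and $\tilde Y$ appear with trivial $G$-action, the relative part $C_*(\tilde X,\tilde F)$ is the free piece, and the condition is that $C_*(\tilde f)$ be ${\bb Z}[\pi\times G]$-chain equivalent (not merely ${\bb Z}\pi$-equivalent) to a finite free ${\bb Z}[\pi\times G]$-complex. Relatedly, your necessity sketch calls $C(\tilde g)$ a ``finite free'' complex — it is not, precisely because of those trivial-action terms; the paper instead uses the short exact sequence $0\to C_*(\tilde f)\to C_*(\tilde f^n)\to C_{*-1}(\tilde X^n,\tilde F)\to 0$ with only the quotient free.

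The second gap is the step you yourself flag as the obstacle: why the Smith condition makes the class defined at all. A ``Maschke-type / idempotent argument splitting ${\bb Z}[\pi\times G]$ along the prime factors of $|G|$'' does not exist integrally (there are no such idempotents in ${\bb Z}[\pi\times G]$), so this cannot be the mechanism. The paper's argument is different and genuinely needed: take a free ${\bb Z}G$-resolution $P$ of ${\bb Z}$ and the two-term ${\bb Z}[\tfrac1{|G|}][G]$-projective resolution $P'$ of ${\bb Z}[\tfrac1{|G|}]$, and use the Smith condition $H_*(f;{\bb F}_p\pi)=0$ for $p\mid |G|$ to get chain equivalences $C_*(\tilde f)\simeq C_*(\tilde f)\otimes P\simeq C_*(\tilde f)\otimes P[\tfrac1{|G|}]\simeq C_*(\tilde f)\otimes P'$; the first has free f.g.\ ${\bb Z}[\pi\times G]$-terms and the last has bounded (co)homological dimension, whence (Mislin, or Assadi--Vogel) $C_*(\tilde f)$ is equivalent to a finite f.g.\ projective ${\bb Z}[\pi\times G]$-complex and $[C_*(\tilde f)]\in\tilde K_0({\bb Z}[\pi\times G])$ is defined; no relative or localized $K$-group ever appears. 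Finally, in the sufficiency direction your ``realize $D_*$ by Wall realization, equivariantly'' needs the preliminary inductive construction (surject then isomorphize $\pi_1$, then kill $\pi_j(f^i)$ for $j\le i$ using finitely many ${\bb Z}[\pi\times G]$-generators of $H_{i+1}(f^i;{\bb Z}\pi)$) so that in the top dimension Hurewicz converts the chosen free resolution of $H_{n+1}(f^n;{\bb Z}\pi)$ into actual equivariant cell attachments; as stated, your realization step skips the connectivity needed to make the algebra geometric.
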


The first is the Smith condition. In the second condition, the chain complex $C_*(\tilde{f})$ of the $\pi$-cover $\tilde{f}\colon \tilde{F}\to \tilde{Y}$ of $f$ is a ${\bb Z}\pi$-chain complex. Then we regard $C_*(\tilde{f})$ as a ${\bb Z}[\pi\times G]$-chain complex with trivial $G$-action. We will argue that the Smith condition implies that $C_*(\tilde{f})$ has a finite resolution of finitely generated ${\bb Z}[\pi\times G]$-projective modules, and therefore gives a well-defined element $[C_*(\tilde{f})]\in \tilde{K}_0({\bb Z}[\pi\times G])$. Moreover, since the terms in $C_*(\tilde{f})$ are finitely generated free ${\bb Z}[\pi]$-modules, the obstruction lies in the kernel of the homomorphism that forgets the $G$-action 
\[
[C_*(\tilde{f})]\in
\text{Ker}(\;\tilde{K}_0({\bb Z}[\pi\times G])\to \tilde{K}_0({\bb Z}[\pi])\;).
\]

\begin{theorem}\label{nontrivialtarget}
Suppose $Y$ is a finite semi-free connected $G$-complex, with $\pi=\pi_1Y$. Suppose $F$ is a finite complex and $f\colon F\to Y^G$ is a map. Then $F$ can be the fixed set of a finite semi-free $G$-complex $X$, and $f$ has pseudo-equivalence extension $g\colon X\to Y$, if and only if the following are satisfied.
\begin{enumerate}
\item The map $f$ induces isomorphisms $H_*(F;{\bb F}_p\pi)\cong H_*(Y^G;{\bb F}_p\pi)$ for all prime factors $p$ of $|G|$. 
\item An obstruction $[C_*(\tilde{f})]\in \tilde{K}_0({\bb Z}[\Gamma])$ vanishes.
\end{enumerate}
\end{theorem}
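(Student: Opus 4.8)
The proof runs in parallel with that of Theorem~\ref{trivialtarget}, with the group ring ${\bb Z}[\pi\times G]$ replaced throughout by ${\bb Z}[\Gamma]$: everything takes place on the $\pi$-covers pulled back from the universal cover $\tilde Y\to Y$, on which $\pi$ acts freely and $G=\Gamma/\pi$ acts semi-freely, the two assembling into a $\Gamma$-action, and no step uses a splitting of $1\to\pi\to\Gamma\to G\to 1$. The controlling object is the ${\bb Z}[\Gamma]$-chain complex $D_*:=C_*(\tilde f)$, i.e.\ the algebraic mapping cone of the $\Gamma$-equivariant chain map $\tilde f\colon C_*(\tilde F)\to C_*(\widetilde{Y^G})$ between the cellular chains of the $\pi$-covers pulled back from $\tilde Y\to Y$; it is a finite complex of finitely generated free ${\bb Z}\pi$-modules carrying a compatible $\Gamma$-action. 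Exactly as for Theorem~\ref{trivialtarget} (see Section~\ref{scondition}), condition~(1) forces $D_*$ to be ${\bb Z}[\Gamma]$-chain homotopy equivalent to a finite complex of finitely generated projective ${\bb Z}[\Gamma]$-modules, so $[D_*]\in\tilde K_0({\bb Z}[\Gamma])$ is defined, and since $D_*$ is finite free over ${\bb Z}\pi$ its image in $\tilde K_0({\bb Z}\pi)$ vanishes.

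For necessity, suppose $g\colon X\to Y$ is a pseudo-equivalence extension. Condition~(1) is the local-coefficient Smith theory of Section~\ref{scondition} applied to the ${\bb Z}\pi$-acyclic semi-free mapping cone of $g$. For condition~(2), view $g$ as a map of pairs $(X,F)\to(Y,Y^G)$: since $X$ is obtained from $F$, and $Y$ from $Y^G$, by attaching free $G$-cells, the relative chain complexes $C_*(\tilde X,\tilde F)$ and $C_*(\tilde Y,\widetilde{Y^G})$ are finite free over ${\bb Z}[\Gamma]$, so the mapping cone of $\bar g\colon C_*(\tilde X,\tilde F)\to C_*(\tilde Y,\widetilde{Y^G})$ has trivial class; moreover $\tilde g$ is a ${\bb Z}\pi$-chain equivalence, so its mapping cone $\mathrm{cone}(\tilde g)$ is ${\bb Z}\pi$-contractible, hence ${\bb Z}[\Gamma]$-acyclic and (a fortiori satisfying the Smith condition, hence ${\bb Z}[\Gamma]$-perfect by the argument cited above) of trivial class as well. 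The distinguished triangle over ${\bb Z}[\Gamma]$ relating $D_*=\mathrm{cone}(\tilde f)$, $\mathrm{cone}(\tilde g)$ and $\mathrm{cone}(\bar g)$ then gives $[D_*]=0$.

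For sufficiency, assume (1) and (2). First, by equivariant obstruction theory, attach free $G$-cells to $F$ over $Y$ — finitely many in dimensions $0$ and $1$ to make the map a $\pi_1$-isomorphism, then higher free $G$-cells to kill the homology of the mapping cone of the current map over ${\bb Z}\pi$ — producing a possibly infinite semi-free $G$-complex $X'\supset F$ with ${X'}^G=F$ and a pseudo-equivalence $g'\colon X'\to Y$ extending $f$; condition~(1) is precisely what allows each successive obstruction to be realized by free $G$-cells, and the ${\bb Z}[\Gamma]$-finite domination of $D_*$ from the first paragraph allows $X'$ to be chosen finitely $G$-dominated rel $F$. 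Second, apply the equivariant finiteness obstruction: for a finitely $G$-dominated semi-free $G$-complex that is already finite on its fixed set, the sole orbit type requiring completion is the free one, whose obstruction lies in $\tilde K_0$ of the corresponding equivariant group ring ${\bb Z}[\Gamma]$ and equals the Wall finiteness obstruction of the cellular ${\bb Z}[\Gamma]$-chain complex of the free part of $\tilde X'$, namely $\pm[D_*]$. Thus $X'$ is $G$-homotopy equivalent rel $F$, over $Y$, to a finite semi-free $G$-complex $X$ with $X^G=F$ precisely when $[D_*]=0$.

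The main obstacle is the confluence of two points: establishing the finite-domination statement of the first paragraph, and the finite-domination clause in the first step of sufficiency. Both require turning the prime-by-prime Smith condition into a single integral statement over the twisted, possibly non-split group ring ${\bb Z}[\Gamma]$, and performing the geometric constructions $\Gamma$-equivariantly so that the Smith exact sequences remain compatible with the $\pi_1$-twisting; the distinguished-triangle bookkeeping in the necessity argument and the matching of geometric and algebraic finiteness obstructions in the second step of sufficiency are then routine. The conceptual content is that the Smith condition already determines the single class $[D_*]\in\tilde K_0({\bb Z}[\Gamma])$, and that this one $K$-theory class is the complete obstruction to upgrading the always-available infinite, finitely dominated construction to a finite $G$-complex.
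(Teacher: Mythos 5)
Your overall strategy (attach free $G$-cells to make the map highly connected, interpret the remaining obstruction as a Wall-type finiteness class, and identify it with $[C_*(\tilde f)]$ via exact sequences whose third terms $C_{*}(\tilde X,\tilde F)$ and $C_{*}(\tilde Y,\widetilde{Y^G})$ are ${\bb Z}[\Gamma]$-free) is the same as the paper's. But there is a genuine gap at the central point. You assert that ``exactly as for Theorem~\ref{trivialtarget}'' condition (1) forces $C_*(\tilde f)$ (and likewise the cone of $\tilde g$ in your necessity argument) to be ${\bb Z}[\Gamma]$-chain homotopy equivalent to a finite complex of finitely generated projective ${\bb Z}[\Gamma]$-modules, and you even claim that ``no step uses a splitting of $1\to\pi\to\Gamma\to G\to 1$.'' The Theorem~\ref{trivialtarget} argument does not transfer verbatim: it tensors $C_*(\tilde f)$ with a ${\bb Z}G$-free resolution $P$ of ${\bb Z}$ and uses the product structure $\pi\times G$ together with the trivial $G$-action on $C_*(\tilde f)$ to see that each term of $C_*(\tilde f)\otimes P$ is ${\bb Z}[\pi\times G]$-free; over a twisted, non-split $\Gamma$ there is no preferred copy of $G$, the terms of $C_*(\widetilde{Y^G})$ are modules of the form ${\bb Z}[\Gamma/\Gamma_\sigma]$ for the various isotropy subgroups, and the freeness claim must be re-established. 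The paper's resolution of exactly this difficulty is the componentwise analysis of Section~\ref{scondition}: for each component $C$ of $Y^G$ one chooses $\hat C$, uses the splitting $\Gamma=\pi\rtimes\Gamma_{\hat C}$ coming from the isotropy group of a fixed point, runs the Theorem~\ref{trivialtarget} argument over the product subgroup ${\bb Z}[\pi_{\hat C}\times\Gamma_{\hat C}]$ where the local Smith condition \eqref{localsmith} applies, and then inducts up to ${\bb Z}[\Gamma]$ and sums over $\pi_0Y^G$ to define $[C_*(\tilde f)]$. Your final paragraph names this as ``the main obstacle'' but offers no argument for it, so the key lemma of the whole proof is missing; and the assertion that no splitting is used is exactly backwards, since the splittings $G\cong\Gamma_{\hat C}\subset\Gamma$ (equivalently, the semi-freeness of the $Y$- and $X$-actions) are what make the freeness and projectivity statements true.

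A secondary, lesser point: in the sufficiency step you route the conclusion through an infinite pseudo-equivalent extension $X'$, a finite $G$-domination claim, and an ``equivariant finiteness obstruction'' theorem for semi-free complexes rel the fixed set. None of that machinery is set up, and the finite-domination claims again rest on the missing lemma above. The paper avoids this entirely: once $H_{n+1}(f^n;{\bb Z}\pi)$ is shown to admit a finite resolution by finitely generated free ${\bb Z}[\Gamma]$-modules (when the class vanishes), the resolution itself is used as a recipe for attaching the remaining free $G$-cells, producing the finite complex $X$ directly. If you repair the central lemma along the lines of Section~\ref{scondition}, you should also either prove the equivariant finiteness statement you invoke or replace it by this direct resolution-as-recipe argument.
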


The meaning of the two conditions is explained in Sections \ref{scondition} and \ref{proof}. The Smith condition is equivalent to that the condition is satisfied on each connected component of $Y^G$. Then we get a $K$-theory element on each component similar to the first main theorem, and the obstruction $[C_*(\tilde{f})]$ is the sum of these. Moreover, similar to the remark for Theorem \ref{trivialtarget}, we know the obstruction lies in the kernel of the forgetful homomorphism $\tilde{K}_0({\bb Z}[\Gamma])\to \tilde{K}_0({\bb Z}[\pi])$.

We also describe how $\tilde{K}^{top}$ enters to modify the above results in the simple homotopy setting (Theorem \ref{jones_simple}) and $G$-ANR setting (Theorem \ref{jones_anr}).

We remark that Oliver and Petrie \cite{op} studied the extension problem in general different setting (see also Assadi \cite{assadi}, and Morimoto and Iizuka \cite{mi}). When restricted to our problem, they gave the obstruction such that the extension $g$ induces isomorphism on the integral homology. Therefore they solved our problem for the case $Y$ is simply connected. What is new in our theorem is the non-simply connected case for homotopy equivalences.

Another important paper in this direction was Assadi-Vogel \cite{av}, that works in a manifold setting. It is quite close to what we do, although their techniques are different (based on ideas of homology propagation rather than $G$-surgery), formally have less generality (since ${\bb Z}_p\times{\bb Z}_p$ cannot act semi-freely on a manifold, for example) and their calculations focus on finite fundamental groups. Our focus here is mainly on the phenomena that arise when fundamental groups are torsion free, as this paper is intended to provide foundations for later studies of group actions on aspherical manifolds.

\section{Smith Condition}
\label{scondition}

We now explain the Smith condition in more detail. Let $G$ act on connected $Y$, and let $p\colon \tilde{Y}\to Y$ be the universal cover, with the free action by the fundamental group $\pi=\pi_1Y$. The actions lift to self homeomorphisms of the universal cover, and form a group $\Gamma$ fitting into an exact sequence 
\begin{equation}\label{lift}
1\to \pi\to \Gamma\to G\to 1.
\end{equation}

As an example, consider $G={\bb Z}_2=\langle g\rangle$ acting on the real projective space ${\bb RP}^2$ by $g([x_0,x_1,x_2])=[x_0,x_1,-x_2]=[-x_0,-x_1,x_2]$. The universal cover of ${\bb RP}^2$ is the sphere $S^2$, with the covering group $\pi$ generated by the antipode $a(x_1,x_2,x_3)=(-x_1,-x_2,-x_3)$. The action $g$ lifts to $\tilde{g}_1(x_0,x_1,x_2)=(x_0,x_1,-x_2)$ and $\tilde{g}_2(x_0,x_1,x_2)=(-x_0,-x_1,x_2)$. The group $\Gamma={\bb Z}_2\times {\bb Z}_2=\langle \tilde{g}_1\rangle\times \langle \tilde{g}_2\rangle$, and $\pi$ is a subgroup of $\Gamma$ by $a=\tilde{g}_1\tilde{g}_2$.

We use $\tilde{?}$ to denote the lifting/pullback of $?$ along the universal cover of $Y$. For example, we have the pullbacks
\[
\begin{CD}
\tilde{X}  @>\tilde{g}>> 
\widetilde{Y} \\
@VVV  @VVpV \\
X @>g>>
Y
\end{CD}
\qquad\qquad
\begin{CD}
\tilde{F}  @>\tilde{f}>> 
\widetilde{Y^G}=p^{-1}(Y^G) \\
@VVV  @VVpV \\
F=X^G @>f>>
Y^G
\end{CD}
\]
For a connected component $C$ of $Y^G$, we have the pullback
\[
\begin{CD}
\tilde{F}_C=\tilde{f}^{-1}(\tilde{C})  @>>> 
\tilde{C}=p^{-1}(C) \\
@VVV  @VVV \\
F_C=f^{-1}(C) @>>>
C
\end{CD}
\]
In our example, we have $({\bb RP}^2)^G=\{[x_0,x_1,0]\}\sqcup\{[0,0,1]\}={\bb RP}^1\sqcup \{[0,0,1]\}$, and $\widetilde{({\bb RP}^2)^G}=\{(x_0,x_1,0)\}\sqcup\{(0,0,1)\}\sqcup\{(0,0,-1)\}=S^1\sqcup N\sqcup S$ ($N$ and $S$ are the north and south poles).

Let $\tilde{y}\in \tilde{Y}$ and $y=p(\tilde{y})\in Y$. Let $C$ and $\hat{C}$ be the components of $Y^G$ and $\widetilde{Y^G}$ containing $y$ and $\tilde{y}$. Then $\hat{C}$ covers $C$. We may use $\tilde{y}$ to get an isomorphism $\pi_1(Y,y)\cong \pi$. Then the deck transformations $\pi_{\hat{C}}\sub \pi$ of the covering $\hat{C}\to C$ is the image of the homomorphism $\pi_1(C,y)\to \pi_1(Y,y)$, and we get $\tilde{C}=\pi\times_{\pi_{\hat{C}}} \hat{C}$. 

The induced homomorphism $\Gamma_{\tilde{y}}\to G_y$ of isotropy groups is an isomorphism. In particular, for $y\in Y^G$, the isomorphism gives a splitting $G=G_y\cong \Gamma_{\tilde{y}}\sub \Gamma$ of \eqref{lift}. The splitting depends only on the component $\hat{C}$. Therefore we may denote $\Gamma_{\hat{C}}=\Gamma_{\tilde{y}}$, and get $\Gamma=\pi\rtimes\Gamma_{\hat{C}}$. 

The other connected components of $\tilde{C}$ are $a\hat{C}$, $a\in \pi$. Therefore the component $C$ gives a $\pi$-conjugation class of isotropy groups (equivalently, a $\pi$-conjugation class of splittings of \eqref{lift})
\[
\Gamma_C=\{\Gamma_{a\hat{C}}=a\Gamma_{\hat{C}}a^{-1}\colon a\in \pi\}.
\]

In our example, $({\bb RP}^2)^G$ has two connected components $C_1={\bb RP}^1$ and $C_2=[0,0,1]$. Their preimages in $\widetilde{({\bb RP}^2)^G}$ are respectively $\tilde{C}_1=S^1$ and $\tilde{C}_2=\{N,S\}$. We may take $\hat{C}_1=S^1$, $\hat{C}_2=N$, with $a\hat{C}_1=\hat{C}_1$, $a\hat{C}_2=S$. Then $\Gamma_{S^1}=\langle \tilde{g}_1\rangle$, $\Gamma_N=\Gamma_S=\langle \tilde{g}_2\rangle$. The semi-direct products $\Gamma=\pi\rtimes\Gamma_{S^1}=\langle a\rangle\times \langle \tilde{g}_1\rangle$ and $\Gamma=\pi\rtimes\Gamma_N=\pi\rtimes\Gamma_S=\langle a\rangle\times \langle \tilde{g}_2\rangle$ are the usual products. Moreover, we have two fold cover $\hat{C}_1=S^1\to C_1={\bb RP}^1$ with covering group $\pi_{\hat{C}_1}=\pi$, and $\tilde{C}_1=\hat{C}_1=\pi\times_{\pi}\hat{C}_1$. We also have one fold cover $\hat{C}_2\to C_2$ with covering group $\pi_{\hat{C}_2}=1$, and $\tilde{C}_2=\hat{C}_2\sqcup a\hat{C}_2=\pi\times_1 \hat{C}_2$. 

Denote the homomorphism $G\cong \Gamma_{\hat{C}}\sub \Gamma$ by $u\to\tilde{u}$. Then the elements of $\Gamma=\pi\rtimes \Gamma_{\hat{C}}\cong \pi\rtimes G$ are $a\tilde{u}$, with $a\in \pi$ and $u\in G$. The multiplication in $\Gamma$ is given by $a_1\tilde{u}_1a_2\tilde{u}_2=a_1u_1(a_2)\widetilde{u_1u_2}$. Here $u(a)$ is obtained by regarding $a\in \pi_1(Y,y)$ as a loop at $y$ and applying the action of $u\in G$ to the loop. In particular, if $a\in \pi_1(C,y)$ lies in the deck transformation group $\pi_{{\hat{C}}}$, then $u(a)=a$. Therefore $\pi_{\hat{C}}\times\Gamma_{\hat{C}}\cong \pi_{\hat{C}}\times G$ is a subgroup of $\Gamma$.

We have ${\bb Z}[\Gamma]$-chain complexes
\[
C_*(\widetilde{Y^G})
=\oplus_{C\in \pi_0Y^G} C_*(\tilde{C}),\quad
C_*(\tilde{C})
=C_*(\pi\times_{\pi_{\hat{C}}} {\hat{C}})
={\bb Z}\pi\otimes_{{\bb Z}\pi_{\hat{C}}}C_*({\hat{C}}).
\]
Since the isotropy group $\Gamma_{\hat{C}}$ acts trivially on ${\hat{C}}$, we may regard $C_*({\hat{C}})$ as a ${\bb Z}[\pi_{\hat{C}}\times \Gamma_{\hat{C}}]$-module (with trivial action by $\Gamma_{\hat{C}}$). By $\Gamma=\pi\rtimes \Gamma_{\hat{C}}$, we get the following interpretation of the ${\bb Z}[\Gamma]$-chain complex $C_*(\tilde{C})$
\[
C_*(\tilde{C})
={\bb Z}[\pi\rtimes \Gamma_{\hat{C}}]\otimes_{{\bb Z}[\pi_{\hat{C}}\times \Gamma_{\hat{C}}]}C_*({\hat{C}})
=\text{Ind}_{{\bb Z}[\pi_{\hat{C}}\times G]}^{{\bb Z}[\pi\rtimes G]}C_*({\hat{C}}).
\]
Similarly, let $\hat{F}_C\sub \tilde{F}_C$ correspond to $\hat{C}\sub \tilde{C}$. Then we have 
\[
C_*(\widetilde{F})
=\oplus_{C\in \pi_0Y^G}C_*(\widetilde{F}_C),\quad
C_*(\widetilde{F}_C)
=\text{Ind}_{{\bb Z}[\pi_Z\times G]}^{{\bb Z}[\pi\rtimes G]}C_*(\hat{F}_C).
\]

The pseudo-equivalent $G$-map $g\colon X\to Y$ between semi-free $G$-complexes lifts to a $\Gamma$-map $\tilde{g}\colon \tilde{X}\to \tilde{Y}$, and $\tilde{g}$ has the ``fixed part'' $\tilde{f}\colon\tilde{F}\to \widetilde{Y^G}$. Here the fixed part is not fixed by the whole $\Gamma$, but by various isotropy subgroups $\Gamma_{\hat{C}}\sub \Gamma$. We regard the pseudo-equivalent $\Gamma$-map $\tilde{g}\colon \tilde{X}\to \tilde{Y}$ as a pseudo-equivalent $\Gamma_{\hat{C}}$-map. This implies that the mapping cone $C(\tilde{g})$ is a contractible, semi-free $\Gamma_{\hat{C}}$-space. The fixed set $C(\tilde{g})^{\Gamma_{\hat{C}}}$ is the mapping cone of the map $\tilde{X}^{\Gamma_{\hat{C}}}\to \tilde{Y}^{\Gamma_{\hat{C}}}$. By the classical Smith theory \cite{smith}, $C(\tilde{g})^{\Gamma_{\hat{C}}}$ has trivial ${\bb F}_p$-homology for all prime factors $p$ of $|\Gamma_{\hat{C}}|=|G|$. This implies an isomorphism $\tilde{H}_*(\tilde{X}^{\Gamma_{\hat{C}}};{\bb F}_p)\cong \tilde{H}_*(\tilde{Y}^{\Gamma_{\hat{C}}};{\bb F}_p)$.

We note that $\Gamma_{\hat{C}}$ may fix several components $\hat{C}_0,\hat{C}_1,\dots,\hat{C}_k$ of $\widetilde{Y^G}$, in addition to $\hat{C}_0=\hat{C}$. Then the isomorphism $\tilde{H}_*(\tilde{X}^{\Gamma_{\hat{C}}};{\bb F}_p)\cong \tilde{H}_*(\tilde{Y}^{\Gamma_{\hat{C}}};{\bb F}_p)$ is a direct sum of isomorphisms $\tilde{H}_*(\hat{F}_{C_i};{\bb F}_p)
\cong
\tilde{H}_*(\hat{C}_i;{\bb F}_p)$. For $i=0$, this gives the {\em local Smith condition}
\begin{equation}\label{localsmith}
\tilde{H}_*(\hat{F}_C;{\bb F}_p)
\cong
\tilde{H}_*(\hat{C};{\bb F}_p)\quad
\text{for all prime factors $p$ of $|G|$}.
\end{equation}

For our example, we have the condition on $\hat{C}_1=S^1$ with the antipode action by $\pi=\langle a\rangle$, and the condition on $\hat{C}_2=N$ with the trivial group action. The first condition is obtained by applying the usual Smith condition to the action of  $\Gamma_{S^1}=\langle \tilde{g}_1\rangle$ on $S^2$. The second condition is obtained by applying the usual Smith condition to the action of $\Gamma_N=\Gamma_S=\langle \tilde{g}_2\rangle$ on $S^2$. Although the second condition consists of conditions on $N$ and $S$, the two conditions are equivalent by the action of $\pi=\langle a\rangle$.

Finally, we combine the local Smith conditions into the global Smith condition in Theorems \ref{trivialtarget} and \ref{nontrivialtarget}. We pick one connected component $\hat{C}$ of $\widetilde{C}$ for each component $C$ of $Y^G$. Then
\[
\widetilde{Y^G}
=\sqcup_{C\in \pi_0Y^G}\pi\times_{\pi_{\hat{C}}}\hat{C},
\]
and
\begin{align*}
\tilde{H}_*(Y^G;{\bb F}_p\pi)
&=\tilde{H}_*(\widetilde{Y^G};{\bb F}_p) \\
&=\oplus_{C\in \pi_0Y^G}\tilde{H}_*(\pi\times_{\pi_{\hat{C}}}\hat{C};{\bb F}_p)  \\
&=\oplus_{C\in \pi_0Y^G}{\bb F}_p\pi\otimes_{{\bb F}_p\pi_{\hat{C}}}\tilde{H}_*(\hat{C};{\bb F}_p),
\end{align*}
Similarly, we have
\[
\tilde{H}_*(F;{\bb F}_p\pi)
=\tilde{H}_*(\tilde{F};{\bb F}_p)
=\oplus_{C\in \pi_0Y^G}{\bb F}_p\pi\otimes_{{\bb F}_p\pi_{\hat{C}}}\tilde{H}_*(\hat{F}_C;{\bb F}_p).
\]
Then the direct sum of local Smith condition \eqref{localsmith} is the {\em global Smith condition}
\begin{equation}\label{globalsmith}
\tilde{H}_*(F;{\bb F}_p\pi)
\cong 
\tilde{H}_*(Y^G;{\bb F}_p\pi)\quad
\text{for all prime factors $p$ of $|G|$}.
\end{equation}
For our example, the global Smith condition is the direct sum of the Smith conditions on $S^1$, $N$, and $S$.

\section{Proof of Main Theorems}
\label{proof}

The proof of the main theorems in the introduction is an equivariant version of Wall's finiteness obstruction \cite{wall}.

\begin{proof}[Proof of Theorem \ref{trivialtarget}]
We assume that the first (Smith) condition is satisfied and try to construct a pseudo-equivalence extension $g$. In the process, we will encounter the obstruction in the second condition, and will see that it is well-defined.

By $\tilde{H}_0(F;{\bb F}_p\pi)\cong \tilde{H}_0(Y;{\bb F}_p\pi)=0$, we know $F$ is connected. We choose a base point in $F$ and use its image in $Y$ as the base point of $Y$. For any loop $\epsilon$ in $Y$ at the base point, we may attach $G$ copies of loops to $F$, and equivariantly map these loops to the loop $\epsilon$. Since $Y$ is a finite complex, we may attach finitely many such loops to get a semi-free $G$-complex $X^1$ with fixed point set $F$ and a $G$-map $f^1\colon X^1\to Y$ that is surjective on $\pi_1$. 

Since $X^1$ is a finite $G$-complex, there are finitely many loops $\epsilon_i$ generating $\pi_1X^1$. Since $f^1$ is surjective on $\pi_1$, the images $f^1(\epsilon_i)$ generate $\pi$, which is finitely presented because $Y$ is a finite complex. Therefore $\pi$ can be presented by $f^1(\epsilon_i)$ as generators, with finitely many words $f^1(w_j)$ of these loops as relations. For each such word $f^1(w_j)$, we glue $G$-copies of $D^2$ to $X^1$ along $Gw_j$ and equivariantly map these $2$-cells to $Y$. We thus get a semi-free $G$-complex $X^{1.5}$ with fixed point set $F$ and a $G$-map $f^{1.5}\colon X^{1.5}\to Y$ that induces an isomorphism on $\pi_1$. 

Since $f^{1.5}$ is isomorphic on $\pi_1$, by the Hurewicz theorem, we have $\pi_2(f^{1.5})=H_2(f^{1.5};{\bb Z}\pi)$. This implies that $\pi_2(f^{1.5})$ is finitely generated as a ${\bb Z}\pi$-module. In fact, as $G$ is a finite group, the $G$-action also makes $\pi_2(f^{1.5})$ into a finitely generated ${\bb Z}[\pi\times G]$-module. We represent a finite set of ${\bb Z}[\pi\times G]$-generators by maps $S^1\to X^{1.5}$ and $D^2\to Y$ compatible with $f^2$. Then we glue $G$-copies of $D^2$ along $G(S^1\to X^{1.5})$ to $X^{1.5}$ and equivariantly map these $2$-cells to $Y$ by $G(D^2\to Y)$ (for the current case that the $G$-action on $Y$ is trivial, this is $D^2\to Y$). We get a semi-free $G$-complex $X^2$ with fixed point set $F$, and extend $f^{1.5}$ to a $G$-map $f^2\colon X^2\to Y$, such that $f^2$ satisfies $\pi_1(f^2)=\pi_2(f^2)=0$. 

The construction from $f^{1.5}$ to $f^2$ can be inductively extended to higher dimensions. If we have a $G$-map $f^i\colon X^i\to Y$ satisfying $\pi_j(f^i)=0$ for $j\le i$, then we can use a finite set of ${\bb Z}[\pi\times G]$-generators of $\pi_{i+1}(f^i)=H_{i+1}(f^i;{\bb Z}\pi)$ to equivariantly attach $(i+1)$-dimensional free $G$-cells to $X^i$, and extend $f^i$ to a $G$-map $f^{i+1}\colon X^{i+1}\to Y$ satisfying $\pi_j(f^{i+1})=0$ for $j\le i+1$. Inductively, we get $f^n\colon X^n\to Y$ for some $n>\max\{\dim F,\dim Y\}$, such that $\pi_j(f^n)=0$ for $j\le n$. 

Let us consider the effect of one more construction to get $f^{n+1}\colon X^{n+1}\to Y$. The generators used for the construction can be interpreted as a basis of a finitely generated free ${\bb Z}[\pi\times G]$-module $A$ in a surjective ${\bb Z}[\pi\times G]$-homomorphism $A\to H_{n+1}(f^n;{\bb Z}\pi)$. By $n+1>\max\{\dim X^n,\dim Y\}$, we have $H_{n+2}(f^n;{\bb Z}\pi)=0$ and an exact sequence 
\begin{align*}
& H_{n+2}(f^n;{\bb Z}\pi)=0 \to
H_{n+2}(f^{n+1};{\bb Z}\pi) \to 
H_{n+1}(X^{n+1},X^n;{\bb Z}\pi)=A \\
& \to
H_{n+1}(f^n;{\bb Z}\pi)\to
H_{n+1}(f^{n+1};{\bb Z}\pi) \to
H_n(X^{n+1},X^n;{\bb Z}\pi)=0
\to\cdots
\end{align*}
The exact sequence gives $\pi_j(f^{n+1})=H_j(f^{n+1};{\bb Z}\pi)=0$ for $j\le n+1$, and then (by the Hurewicz theorem) a short exact sequence
\[
0 \to
\pi_{n+2}(f^{n+1})
=H_{n+2}(f^{n+1};{\bb Z}\pi) \to
A \to
H_{n+1}(f^n;{\bb Z}\pi)\to 0.
\]
Note that $\pi_{n+2}(f^{n+1})$ is to be used for the further construction based on $f^{n+1}$. The short exact sequence shows that, if $H_{n+1}(f^n;{\bb Z}\pi)$ has a finite resolution of finitely generated, free ${\bb Z}[\pi\times G]$-modules,
\[
0\to A_k\to \cdots\to A_2\to A_1\to H_{n+1}(f^n;{\bb Z}\pi)\to 0,
\]
then the resolution can be used as a recipe for constructing a $G$-map $f^{n+k}\colon X^{n+k}\to Y$, such that $G$ acts semi-freely on $X^{n+k}$ with $F$ as fixed point set, $f^{n+k}$ extends $f$ and is a non-equivariant homotopy equivalence. This $f^{n+k}$ is the pseudo-equivalence extension in the theorem.

Next we argue that the Smith condition implies that the ${\bb Z}[\pi\times G]$-module $H_{n+1}(f^n;{\bb Z}\pi)$ has a finite resolution by finitely generated, projective ${\bb Z}[\pi\times G]$-modules. This induces (by Section 3 of \cite{mislin}, for example) an element $[H_{n+1}(f^n;{\bb Z}\pi)]\in \tilde{K}_0({\bb Z}[\pi\times G])$, such that the element vanishes if and only if all the projective modules in the resolution can be chosen to be free. Therefore the element is the obstruction for completing our construction.

We identify this obstruction with the $K$-theory element represented by the ${\bb Z}\pi$-chain complex $C_*(\tilde{f})$, regarded as a ${\bb Z}[\pi\times G]$-chain complex with a trivial $G$-action. This is crucial for detailed calculations.

There is a ${\bb Z}G$-free resolution $P$ of ${\bb Z}$ that is finitely ${\bb Z}G$-generated in each dimension. In addition, there is a ${\bb Z}[\frac{1}{|G|}][G]$-projective resolution $P'$ of ${\bb Z}[\frac{1}{|G|}]$ that is nonzero only in dimensions $0$ and $1$. Then we have
\[
C_*(\tilde{f})=C_*(\tilde{f})\otimes {\bb Z}
\simeq C_*(\tilde{f})\otimes P
\simeq C_*(\tilde{f})\otimes P[\tfrac{1}{|G|}]
\simeq C_*(\tilde{f})\otimes P'.
\]
The first chain homotopy equivalence is due to the resolution. The second chain homotopy equivalence is due to the Smith condition, $H_*(f;{\bb F}_p\pi)=0$, for all prime factors $p$ of $|G|$. The third chain homotopy equivalence is due to the fact that $P[\tfrac{1}{|G|}]$ is also a ${\bb Z}G$-projective resolution of ${\bb Z}[\frac{1}{|G|}]$. We note that each term in $C_*(\tilde{f})\otimes P$ is finitely generated and ${\bb Z}[\pi\times G]$-free. We also note that, since $C_*(\tilde{f})$ vanishes above dimension $n$, and $P'$ vanishes away from dimensions $0$ and $1$, the chain complex $C_*(\tilde{f})\otimes P'$ has cohomological dimension $\le n+1$. This implies (by Theorem 3.5 of \cite{mislin} or Lemma 1.7 of \cite{av}, for example) that $C_*(\tilde{f})$ is chain homotopy equivalent to a finite chain complex of finitely generated, projective ${\bb Z}[\pi\times G]$-modules. This gives a well defined element $[C_*(\tilde{f})]\in \tilde{K}_0({\bb Z}[\pi\times G])$.

The chain complex $C_*(\tilde{f})$ fits into an exact sequence of ${\bb Z}[\pi\times G]$-chain complexes
\[
0\to C_*(\tilde{f})
\to C_*(\tilde{f}^n)
\to C_{*-1}(\tilde{X}^n,\tilde{F})
\to 0.
\]
Since $C_{*-1}(\tilde{X}^n,\tilde{F})$ is a finite chain complex of finitely generated ${\bb Z}[\pi\times G]$-free modules, $C_*(\tilde{f}^n)$ is also chain homotopy equivalent to a finite chain complex of finitely generated, projective ${\bb Z}[\pi\times G]$-modules, and $[C_*(\tilde{f}^n)]=[C_*(\tilde{f})]\in \tilde{K}_0(\pi\times G)$. On the other hand, we know the homology of $C_*(\tilde{f}^n)$ vanishes at all dimensions except for $H_{n+1}(f^n;{\bb Z}\pi)$. Therefore the ${\bb Z}[\pi\times G]$-chain complex $\cdots\to 0\to H_{n+1}(f^n;{\bb Z}\pi)\to 0\to \cdots$ is chain homotopy equivalent to $C_*(\tilde{f}^n)$. This implies that $H_{n+1}(f^n;{\bb Z}\pi)$ has a finite resolution by finitely generated free ${\bb Z}[\pi\times G]$-modules, and
\[
(-1)^{n+1}[H_{n+1}(f^n;{\bb Z}\pi)]
=[C_*(\tilde{f}^n)]
=[C_*(\tilde{f})]
\in \tilde{K}_0({\bb Z}[\pi\times G]).
\]
The equality to $[C_*(\tilde{f})]$ shows that the obstruction $(-1)^{n+1}[H_{n+1}(f^n;{\bb Z}\pi)]$ is independent of our choice of construction.
\end{proof}

\begin{proof}[Proof of Theorem \ref{nontrivialtarget}] 
The proof is similar to Theorem \ref{trivialtarget}. The inductive construction of $f^n\colon X^n\to Y$ is the same, except the new cells can be mapped to $Y$ instead of just the fixed set, and all the homotopy groups and homology groups (at the universal cover level) are ${\bb Z}[\Gamma]$-modules. For sufficiently large $n$, the obstruction for constructing the pseudo-equivalence is $[H_{n+1}(f^n;{\bb Z}\pi)]\in \tilde{K}_0({\bb Z}[\Gamma])$. We need to argue that $C_*(\tilde{f})$ represents an element in $\tilde{K}_0({\bb Z}[\Gamma])$ that is the same as $(-1)^{n+1}[H_{n+1}(f^n;{\bb Z}\pi)]$.

In Section \ref{scondition}, we saw that the global Smith condition \eqref{globalsmith} in Theorem \ref{nontrivialtarget} is equivalent to the local Smith condition \eqref{localsmith} for each component $C$ of $Y^G$. Following the same argument for Theorem \ref{trivialtarget}, we know the ${\bb Z}[\pi_{\hat{C}}\times \Gamma_{\hat{C}}]$-chain complex $C_*(\hat{F}_C\to \hat{C})$ is chain homotopy equivalent to a finite chain complex of finitely generated, projective ${\bb Z}[\pi_{\hat{C}}\times \Gamma_{\hat{C}}]$-modules. This gives a well-defined $K$-theory element
\[
[C_*(\hat{F}_C\to \hat{C})]
\in \tilde{K}_0({\bb Z}[\pi_{\hat{C}}\times \Gamma_{\hat{C}}]).
\]
By $\pi_{\hat{C}}\times \Gamma_{\hat{C}}\sub \pi\rtimes \Gamma_{\hat{C}}=\Gamma$, this inducts to
\begin{align*}
[C_*(\tilde{F}_C\to \tilde{C})]
&=[{\bb Z}[\Gamma]\otimes_{{\bb Z}[\pi_{\hat{C}}\times \Gamma_{\hat{C}}]}C_*(\hat{F}_C\to \hat{C})] \\
&=\text{Ind}_{{\bb Z}[\pi_{\hat{C}}\times \Gamma_{\hat{C}}]}^{{\bb Z}[\Gamma]}[C_*(\hat{F}_C\to \hat{C})]
\in \tilde{K}_0({\bb Z}[\Gamma]),
\end{align*}
which further adds up to 
\[
[C_*(\tilde{f})]
=\sum_{C\in \pi_0Y^G}[C_*(\tilde{F}_C\to \tilde{C})]
\in \tilde{K}_0({\bb Z}[\Gamma]).
\]

Now we know the ${\bb Z}[\Gamma]$--chain complex $C_*(\tilde{f})$ is chain homotopy equivalent to a finite chain complex of finitely generated, projective ${\bb Z}[\Gamma]$-modules and gives a $K$-theory element. Then we have short exact sequences of ${\bb Z}[\Gamma]$-chain complexes
\begin{align*}
0&\to C_*(\tilde{f}\colon \tilde{F}\to \widetilde{Y^G})
\to C_*(\tilde{F}\to \widetilde{Y})
\to C_{*-1}(\widetilde{Y},\widetilde{Y^G})
\to 0, \\
0&\to C_*(\tilde{F}\to \widetilde{Y})
\to C_*(\tilde{f}^n\colon \tilde{X}^n\to \widetilde{Y})
\to C_{*-1}(\tilde{X}^n,\tilde{F})
\to 0.
\end{align*}
Since the $G$-action on $Y$ is semi-free, the ${\bb Z}[\Gamma]$-modules in $C_{*-1}(\widetilde{Y},\widetilde{Y^G})$ are free. Since $X^n$ is obtained by glueing free $G$-cells to $F$, the ${\bb Z}[\Gamma]$-modules in $C_{*-1}(\tilde{X}^n,\tilde{F})$ are also free. Therefore
\begin{align*}
[C_*(\tilde{f})]
&=[C_*(\tilde{F}\to \widetilde{Y})]  \\
&=[C_*(\tilde{f}^n\colon \tilde{X}^n\to \widetilde{Y})]  \\
&=(-1)^{n+1}[H_{n+1}(f^n;{\bb Z}\pi)]
\in \tilde{K}_0({\bb Z}[\Gamma]).
\end{align*}
This completes the identification of the $K$-theory obstruction. 
\end{proof}

The main theorems can be modified to get the pseudo-equivalence extension to be a {\em simple} homotopy equivalence. The only change is the $K$-theory in which the obstruction lives. The following is the analogue of Theorem \ref{trivialtarget} making use of an algebraic $K$-theory introduced in \cite{av}.

\begin{theorem}\label{jones_simple}
Suppose $f\colon F\to Y$ is a map of finite complexes, with $Y$ connected and $\pi=\pi_1Y$. Then $F$ can be the fixed set of a finite semi-free $G$-complex $X$, and $f$ has simple pseudo-equivalence extension $g\colon X\to Y$, if and only if the following are satisfied.
\begin{enumerate}
\item The map $f$ induces isomorphisms $\tilde{H}_*(F;{\bb F}_p\pi)\cong \tilde{H}_*(Y;{\bb F}_p\pi)$ for all prime factors $p$ of $|G|$. 
\item An obstruction $[C_*(\tilde{f})]\in Wh_1^T(\pi\subset\pi\times G)$ vanishes.
\end{enumerate}
\end{theorem}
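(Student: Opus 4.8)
The plan is to re-run the proof of Theorem \ref{trivialtarget} essentially verbatim, tracking the extra bookkeeping needed to control the simple homotopy type, and to see that the single new ingredient is the replacement of $\tilde K_0({\bb Z}[\pi\times G])$ by the group $Wh_1^T(\pi\subset\pi\times G)$ of \cite{av}. Recall that this relative Whitehead-type group is built to house exactly two kinds of data at once: the Wall finiteness obstruction of a ${\bb Z}[\pi\times G]$-chain complex whose underlying ${\bb Z}\pi$-complex is finitely dominated, together with the Whitehead torsion that arises when one tries to make the resulting $G$-CW structure simple over $Y$. Concretely, $Wh_1^T(\pi\subset\pi\times G)$ sits in an exact sequence relating $\tilde K_0({\bb Z}[\pi\times G])$, $Wh_1({\bb Z}[\pi\times G])$, and $Wh_1({\bb Z}\pi)$, and the class $[C_*(\tilde f)]$ is its image of the chain complex $C_*(\tilde f)$ regarded (as in Theorem \ref{trivialtarget}) as a ${\bb Z}[\pi\times G]$-complex with trivial $G$-action; the Smith condition guarantees, by the same $P\simeq P[\tfrac1{|G|}]\simeq P'$ argument, that this complex is ${\bb Z}[\pi\times G]$-finitely dominated, so the class is defined.

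First I would carry out the inductive construction of $f^n\colon X^n\to Y$ exactly as in the proof of Theorem \ref{trivialtarget}, attaching free $G$-cells through dimension $n>\max\{\dim F,\dim Y\}$ so that $\pi_j(f^n)=0$ for $j\le n$ and the only nonvanishing ${\bb Z}[\pi\times G]$-module is $H_{n+1}(f^n;{\bb Z}\pi)$. The new requirement is that the \emph{non}equivariant homotopy equivalence we eventually produce be \emph{simple}, i.e.\ that its Whitehead torsion in $Wh_1({\bb Z}\pi)$ vanish. Since the cells we add are free $G$-cells mapping to $Y$, each attachment multiplies the relevant torsion by a controlled elementary matrix, so the torsion of $f^{n+k}$ over $Y$ is computable from the ${\bb Z}[\pi\times G]$-resolution of $H_{n+1}(f^n;{\bb Z}\pi)$ together with the chosen bases. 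The point is that completing the construction requires not merely a finite \emph{free} ${\bb Z}[\pi\times G]$-resolution of $H_{n+1}(f^n;{\bb Z}\pi)$, but a finite \emph{based-free} resolution whose induced ${\bb Z}\pi$-torsion is trivial; the obstruction to finding such a resolution is, by definition of the group in \cite{av}, precisely the class of $H_{n+1}(f^n;{\bb Z}\pi)$ in $Wh_1^T(\pi\subset\pi\times G)$.

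Next I would identify this obstruction with $[C_*(\tilde f)]$, using the two short exact sequences of ${\bb Z}[\pi\times G]$-chain complexes from the proof of Theorem \ref{trivialtarget},
\[
0\to C_*(\tilde f)\to C_*(\tilde f^n)\to C_{*-1}(\tilde X^n,\tilde F)\to 0,
\]
and the observation that $C_{*-1}(\tilde X^n,\tilde F)$ is finitely generated based-free over ${\bb Z}[\pi\times G]$ with vanishing induced ${\bb Z}\pi$-torsion (it is a complex of based free modules, so its image in $Wh_1^T$ is zero). Additivity of the $Wh_1^T$-class over such extensions then gives $(-1)^{n+1}[H_{n+1}(f^n;{\bb Z}\pi)]=[C_*(\tilde f^n)]=[C_*(\tilde f)]$ in $Wh_1^T(\pi\subset\pi\times G)$, and in particular the obstruction is independent of all the choices made in the construction. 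Conversely, vanishing of $[C_*(\tilde f)]$ feeds a based-free resolution with trivial ${\bb Z}\pi$-torsion back into the construction, completing a simple pseudo-equivalence extension; the Smith condition is forced on the other direction exactly as in Section \ref{scondition} applied to the mapping cone.

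The main obstacle I expect is purely formal rather than geometric: one must verify that the group $Wh_1^T(\pi\subset\pi\times G)$ of \cite{av} is genuinely the right receptacle, i.e.\ that its defining exact sequence makes the two assertions above --- ``the obstruction to a based-free resolution with trivial $\pi$-torsion lives here'' and ``the class is additive over extensions by based-free complexes'' --- literally true, and that the forgetful map to $Wh_1({\bb Z}\pi)$ matches the geometric Whitehead torsion over $Y$. This is bookkeeping in relative $K$-theory (a mapping-cone / relative-$K_0$-versus-$K_1$ comparison of the type in \cite{mislin}), but it is where all the care is needed; once it is in place, every other step is the corresponding step in the proof of Theorem \ref{trivialtarget} with ``free'' upgraded to ``based free with controlled torsion.''
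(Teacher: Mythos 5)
Your proposal follows essentially the same route as the paper: rerun the construction from the proof of Theorem~\ref{trivialtarget} while tracking the natural ${\bb Z}\pi$-bases, use the Smith condition (via Lemmas 1.6 and 1.7 of \cite{av}) to get a well-defined class $[C_*(\tilde f)]\in Wh_1^T(\pi\subset\pi\times G)$, and observe that the attached free $G$-cells make $C_{*-1}(\tilde X^n,\tilde F)$ a complex of copies of ${\bb Z}[\pi\times G]$ carrying the ${\bb Z}\pi$-basis $G$, hence trivial in $Wh_1^T$, so the obstruction to completing a simple pseudo-equivalence extension is exactly $[C_*(\tilde f)]$. The only slight imprecision is your parenthetical reason for $[C_{*-1}(\tilde X^n,\tilde F)]=0$: being ``based free'' is not enough (an arbitrary ${\bb Z}\pi$-basis could contribute torsion); what makes it vanish is that each summand comes with the preferred basis $G$, which your geometric construction indeed provides.
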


Assadi and Vogel \cite{av} introduced the Grothendick group $Wh_1^T(\pi\subset\pi\times G)$ of the additive category of finitely generated ${\bb Z}\pi$-based ${\bb Z}[\pi\times G]$-projective modules, such that the ${\bb Z}[\pi\times G]$-projective (actually free) module ${\bb Z}[\pi\times G]$ with the choice of $G$ as ${\bb Z}\pi$-basis is trivial in $Wh_1^T$. They showed that there is an exact sequence ($\tilde{K}_0(\pi)$ is denoted $Wh_0(\pi)$ in \cite{av})
\[
Wh_1(\pi\times G)\overset{T}{\to}
Wh_1(\pi)\overset{\beta}{\to}
Wh_1^T(\pi\subset\pi\times G)\overset{\alpha}{\to} 
\tilde{K}_0({\bb Z}[\pi\times G])\overset{T}{\to} 
\tilde{K}_0({\bb Z}\pi).
\]
The cells of $F$ and $Y$ give natural ${\bb Z}\pi$-bases for the modules in $C_*(\tilde{f})$. By Lemmas 1.6 and 1.7 of \cite{av}, under the Smith condition, the chain complex $C_*(\tilde{f})$ with the natural ${\bb Z}\pi$-bases gives a well defined element $[C_*(\tilde{f})]\in Wh_1^T(\pi\subset\pi\times G)$. The image of this element in $\tilde{K}_0({\bb Z}[\pi\times G])$ is the obstruction in the main theorem.

The proof of Theorem \ref{jones_simple} is the same as the proof of Theorem \ref{trivialtarget}, with additional tracking of the basis in the construction. The key point is that the free $G$-cells used in the construction give the chain complex $C_{*-1}(\tilde{X}^n,\tilde{F})$, where each term is the direct sum of finitely many copies of ${\bb Z}[\pi\times G]$ with the ${\bb Z}\pi$-basis $G$. Therefore $[C_{*-1}(\tilde{X}^n,\tilde{F})]=0\in Wh_1^T(\pi\subset\pi\times G)$.

The next result is the modification needed to treat compact ANR-spaces in place of finite complexes.

\begin{theorem}\label{jones_anr}
Suppose $f\colon F\to Y$ is a map of compact ANR-spaces, with $Y$ connected and $\pi=\pi_1Y$. Then $F$ can be the fixed set of a semi-free compact $G$-ANR-space $X$, and $f$ has pseudo-equivalence extension $g\colon X\to Y$, if and only if the following are satisfied.
\begin{enumerate}
\item The map $f$ induces isomorphisms $\tilde{H}_*(F;{\bb F}_p\pi)\cong \tilde{H}_*(Y;{\bb F}_p\pi)$ for all prime factors $p$ of $|G|$. 
\item An obstruction $[C_*(\tilde{f})]\in \tilde{K}_0^{top}({\bb Z}\pi\subset {\bb Z}[\pi\times G])$ vanishes.
\end{enumerate}
\end{theorem}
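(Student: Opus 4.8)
The plan is to adapt the proof of Theorem~\ref{trivialtarget}. First I would run the inductive construction $f^1,f^{1.5},f^2,\dots,f^n\colon X^n\to Y$ exactly as there, attaching finitely many free $G$-cells to kill $\pi_j(f^i)$ for $j\le i$; this part is purely homotopy-theoretic and is insensitive to whether the ambient category is finite $G$-complexes or compact $G$-ANRs, since a compact ANR is dominated by a finite complex and hence has finitely presented $\pi_1$ and finitely generated homotopy/homology modules over ${\bb Z}[\pi\times G]$. As in that proof, the Smith condition --- which is again forced in the ANR case, because Smith theory is homological and the mapping cone of a pseudo-equivalence is a contractible semi-free $G$-ANR whose fixed set is the mapping cone of $f$ --- implies, via the tensoring argument with the resolutions $P$ and $P'$, that $C_*(\tilde f)\simeq C_*(\tilde f^n)$ is chain homotopy equivalent over ${\bb Z}[\pi\times G]$ to a finite complex of finitely generated projectives, and that $H_{n+1}(f^n;{\bb Z}\pi)$ admits a finite resolution by finitely generated projective ${\bb Z}[\pi\times G]$-modules carrying the natural ${\bb Z}\pi$-bases coming from the cells. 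Up to this point the argument is identical to the finite-complex case.

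The difference lies in which projectives can be realized geometrically. In the finite-complex setting one must make the resolving modules ${\bb Z}[\pi\times G]$-free, giving the obstruction in $\tilde K_0({\bb Z}[\pi\times G])$; in the compact $G$-ANR setting one has strictly more room, because by the theory of (equivariant) Hilbert cube manifolds and controlled topology --- Chapman and West non-equivariantly, Quinn and Steinberger--West equivariantly --- a compact semi-free $G$-ANR with fixed set $F$ may be assembled from $F$ by attaching free $G$-cells that are only required to be locally finite and controlled over $F$, rather than finite in number. Thus the genuine requirement is that the resolution of $H_{n+1}(f^n;{\bb Z}\pi)$ be completable by such a controlled-infinite family of free $G$-cells, and the group $\tilde K_0^{top}({\bb Z}\pi\subset{\bb Z}[\pi\times G])$ is designed --- in parallel with Assadi--Vogel's $Wh_1^T$, but with the germ-at-$F$ / controlled-triviality relations adjoined --- so that the class $[C_*(\tilde f)]$, computed with the natural ${\bb Z}\pi$-cell bases, vanishes precisely when such a completion exists. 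Granting this, vanishing of the obstruction produces a pseudo-equivalence extension $g\colon X\to Y$ with $X$ a compact semi-free $G$-ANR and $X^G=F$; conversely, from any such $X$ one extracts via Quinn's controlled-finiteness theory the invariant $(-1)^{n+1}[C_*(\tilde f)]\in\tilde K_0^{top}$, exactly as in Theorem~\ref{trivialtarget}, and its vanishing is then automatic.

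Two technical points must be settled, the second being the main obstacle. First, one checks that $C_*(\tilde f)$ with its cell bases determines a well-defined element of $\tilde K_0^{top}({\bb Z}\pi\subset{\bb Z}[\pi\times G])$ under only the Smith hypothesis, following the Assadi--Vogel pattern (their Lemmas~1.6--1.7), the free $G$-cells of the construction again contributing the trivial element much as $[C_{*-1}(\tilde X^n,\tilde F)]=0$ in the simple case. Second, and this is the crux, one must prove the \textbf{realization} statement: when the $\tilde K_0^{top}$-class vanishes, the locally finite, controlled-over-$F$ family of free $G$-cells prescribed by the (controlled-stably-free) resolution can actually be attached so that the resulting space is a genuine compact $G$-ANR, not merely a locally finite $G$-CW complex with pathological local structure along $F$. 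This is where the equivariant $\alpha$-approximation / $Q$-manifold recognition theorems and controlled engulfing of Quinn and Steinberger--West enter, and matching the algebra of $\tilde K_0^{top}$ --- in particular the lower $K$-theory contributions, such as the $K_{-1}$ classes that it kills relative to $\tilde K_0$ through the Bass--Heller--Swan formula --- to this geometry is the delicate part.
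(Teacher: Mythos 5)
Your overall shape is right---reduce to the finite-complex obstruction of Theorem \ref{trivialtarget} and then account for the extra flexibility of the compact $G$-ANR category via controlled topology---but the proposal has a genuine gap exactly at the point you defer as ``the delicate part,'' and that deferred point is the entire content of the theorem beyond Theorem \ref{trivialtarget}. You assert that $\tilde K_0^{top}({\bb Z}\pi\subset{\bb Z}[\pi\times G])$ is ``designed'' so that vanishing of $[C_*(\tilde f)]$ there is equivalent to the existence of a controlled completion by free $G$-cells over $F$; this is essentially assuming the conclusion. What geometry actually supplies (Quinn) is that controlled finitely dominated free $G$-complexes \emph{over $F$} realize arbitrary elements of $H_0(F;\tilde{\bf K}({\bb Z}[G]))$, so by gluing one may change the finite-complex obstruction $[C_*(\tilde f)]\in\tilde K_0({\bb Z}[\pi\times G])$ by anything in the image of $H_0(F;\tilde{\bf K}({\bb Z}[G]))$. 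On the other hand, the exact sequence defining $\tilde K_0^{top}$ says that the kernel of $\tilde K_0({\bb Z}[\pi\times G])\to\tilde K_0^{top}({\bb Z}\pi\subset{\bb Z}[\pi\times G])$ is the image of $H_0(\pi;\tilde{\bf K}({\bb Z}[G]))$, i.e.\ control over $Y$ (or $B\pi$), not over $F$. Your argument never bridges this mismatch, so as written the vanishing of the $\tilde K_0^{top}$-class does not yet yield a realizable correction.

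The paper closes precisely this gap, and does so with an argument you do not have: by West's theorem one is reduced to the finite-complex obstruction; by Carter's vanishing theorem $K_{-i}({\bb Z}[G])=0$ for $i>1$, so $H_0(-;\tilde{\bf K}({\bb Z}[G]))$ is built only from $H_0(-;\tilde K_0({\bb Z}[G]))$ and $H_1(-;K_{-1}({\bb Z}[G]))$; and the Smith condition forces $\pi_0F\to\pi_0Y$ and $\pi_1F\to\pi_1Y$ to be surjective, whence $H_0(F;\tilde{\bf K}({\bb Z}[G]))\to H_0(\pi;\tilde{\bf K}({\bb Z}[G]))$ is surjective. Hence the geometrically realizable indeterminacy (image of $H_0(F;\tilde{\bf K})$) coincides with the algebraic kernel (image of $H_0(\pi;\tilde{\bf K})$), and the genuine obstruction is the image of $[C_*(\tilde f)]$ in $\tilde K_0^{top}$. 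Note also that the paper never rebuilds $X$ cell-by-cell as a locally finite controlled $G$-CW object and then invokes equivariant $Q$-manifold recognition, as you propose; it simply glues a Quinn-realized controlled finitely dominated piece onto the finite-complex construction, so none of the $\alpha$-approximation machinery you invoke is needed. To repair your write-up you should replace the appeal to the intended design of $\tilde K_0^{top}$ by the surjectivity argument above (Carter plus the $\pi_0,\pi_1$ consequence of the Smith condition), which is where the hypothesis of the theorem is actually used.
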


The topological $K$-theory $\tilde{K}_0^{top}$ was introduced by M. Steinberger and J. West \cite{steinberger,sw}, and by F. Quinn \cite{quinn1,quinn2} as the $K$-theoretical obstruction for the topological version of the finiteness theorems for $G$-ANR-spaces. It fits into an exact sequence
\begin{align*}
& H_1(\pi;\tilde{\bf K}({\bb Z}[G]))\to
Wh(\pi\times G)\to 
Wh^{top}(\pi\subset\pi\times G) \\
\to &
H_0(\pi;\tilde{\bf K}({\bb Z}[G]))\to
\tilde{K}_0({\bb Z}[\pi\times G])\to 
\tilde{K}_0^{top}({\bb Z}\pi\subset {\bb Z}[\pi\times G])
\to\cdots
\end{align*}

By a theorem of West \cite{west}, compact ANRs are homotopy equivalent to finite complexes. Therefore we have an obstruction $[C_*(\tilde{f})]\in \tilde{K}_0({\bb Z}[\pi\times G])$ in our main pseudo-equivalence extension theorem. Quinn \cite{quinn1} showed that controlled finitely dominated complexes over $F$ with free $G$-actions have controlled Wall finiteness obstructions in $H_0(F;\tilde{\bf K}({\bb Z}[G]))$, and all elements of this group can be realized. By glueing on such an element, we can change the obstruction $[C_*(\tilde{f})]\in \tilde{K}_0({\bb Z}[\pi\times G])$ by any element in the image of $H_0(F;\tilde{\bf K}({\bb Z}[G]))$. We will explain in the next paragraph that the natural map $H_0(F;\tilde{\bf K}({\bb Z}[G]))\to H_0(Y;\tilde{\bf K}({\bb Z}[G]))\to H_0(\pi;\tilde{\bf K}({\bb Z}[G]))$ is surjective. Therefore $H_0(F;\tilde{\bf K}({\bb Z}[G]))$ and $H_0(\pi;\tilde{\bf K}({\bb Z}[G]))$ have the same image in $\tilde{K}_0({\bb Z}[\pi\times G])$. Thus we conclude that the obstruction for $G$-ANR pseudo-equivalence extension problem actually lies in the image of $\tilde{K}_0({\bb Z}[\pi\times G])$ in $
\tilde{K}_0^{top}({\bb Z}\pi\subset {\bb Z}[\pi\times G])$.

Carter's vanishing theorem \cite{carter2} says that $K_{-i}({\bb Z}[G])=0$ for finite group $G$ and $i>1$. Therefore the spectral sequence that computes $H_0(F;\tilde{\bf K}({\bb Z}[G]))$ consists of only $H_0(F;\tilde{K}_0({\bb Z}[G]))$ and $H_1(F;\tilde{K}_{-1}({\bb Z}[G]))$. The same is true for $H_0(Y;\tilde{\bf K}({\bb Z}[G]))$ and $H_0(\pi;\tilde{\bf K}({\bb Z}[G]))$. To show the surjection, therefore, we only need to show that $H_i(F;\tilde{K}_{-i}({\bb Z}[G]))\to H_i(Y;\tilde{K}_{-i}({\bb Z}[G]))\to H_i(\pi;\tilde{K}_{-i}({\bb Z}[G]))$ is surjective for $i=0,1$. The Smith condition implies that $\pi_iF\to \pi_iY$ is surjective for $i=0,1$. This implies the surjections on $H_i(?;\tilde{K}_{-i}({\bb Z}[G]))$ for $i=0,1$.

\section{Calculations and Examples}

Let $T(r)$ be the mapping torus of a map $S^d\to S^d$ of degree  $r$. Let
\[
f\colon F=T(r)\to Y=S^1.
\]
be the projection map. For a finite group $G$ of order $n$, we try to extend $F$ to be the fixed set of a finite semi-free $G$-complex $X$, and extend $f$ to a pseudo-equivalence $g\colon X\to S^1$. 

We have $\pi_1Y=\langle t\rangle=\{t^i\colon i\in {\bb Z}\}\cong {\bb Z}$, and the only non-trivial ${\bb Z}\langle t\rangle$-homology\footnote{In the literature, ${\bb Z}\langle t\rangle$ is usually denoted ${\bb Z}[t,t^{-1}]$. We use ${\bb Z}\langle t\rangle$ to simplify notation.} of $f$ is
\[
H_d(f;{\bb Z}\langle t\rangle)
={\bb Z}\langle t\rangle/(rt-1).
\]
For a prime $p$, we have $H_d(f;{\bb Z}_p\langle t\rangle)=0$ if and only if $p|r$. Therefore the Smith condition is satisfied for $G$ if and only if
\[
p|n\implies p|r.
\]
This is equivalent to $n$ dividing some power of $r$. Under this assumption, the condition for the semi-free pseudo-equivalence extension is the vanishing of
\[
[{\bb Z}\langle t\rangle/(rt-1)]
\in \tilde{K}_0({\bb Z}[G]\langle t\rangle).
\]

\begin{proposition}\label{eg1}
If $G$ be a finite group of order $n$, and $r$ is a multiple of $n$, then $T(r)\to S^1$ has semi-free pseudo-equivalence extension. If $G$ is also abelian, then the converse is also true.
\end{proposition}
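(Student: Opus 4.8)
The plan is to read off the criterion already in place. Take $Y=S^1$, so $\pi=\pi_1Y=\langle t\rangle\cong{\bb Z}$ and the only nonvanishing homology is $H_d(f;{\bb Z}\langle t\rangle)={\bb Z}\langle t\rangle/(rt-1)$; by Theorem~\ref{trivialtarget} a semi-free pseudo-equivalence extension exists if and only if the Smith condition ``$p\mid n\Rightarrow p\mid r$'' holds and the class $[M]\in\tilde K_0({\bb Z}[G]\langle t\rangle)$ vanishes, where $M={\bb Z}\langle t\rangle/(rt-1)$ is regarded as a ${\bb Z}[G]\langle t\rangle$-module with trivial $G$-action (equivalently $M={\bb Z}[\tfrac1r]$ with $t$ acting by $\tfrac1r$). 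If $n$ divides $r$ the Smith condition is automatic, so in both directions the whole question is whether $[M]=0$.

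For the first assertion I would exhibit an explicit length-one free resolution. Write $r=nm$ and $N=\sum_{g\in G}g\in{\bb Z}[G]$; since $hN=Nh=N$ for every $h\in G$, the element $\gamma=mtN-1$ is central in $S:={\bb Z}[G]\langle t\rangle$. I claim
\[
0\to S\xrightarrow{\ \cdot\gamma\ }S\to M\to 0
\]
is exact. Injectivity of $\cdot\gamma$ is immediate by comparing lowest-order coefficients: $\gamma\cdot\sum_i a_it^i=\sum_j(mNa_{j-1}-a_j)t^j=0$ forces $a_j=mNa_{j-1}$ for all $j$, hence $a_i=0$ for all $i$. For the cokernel, $\gamma\cdot(1-g)=(mtN-1)-(mtN-g)=g-1$ using $Ng=N$, so the augmentation ideal $I_{{\bb Z}[G]}$ lies in $S\gamma$; and $\gamma-(rt-1)=mt(N-n)\in I_{{\bb Z}[G]}S\subseteq S\gamma$, so $rt-1\in S\gamma$ as well. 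Thus $S\gamma$ contains $(rt-1)+I_{{\bb Z}[G]}S$, which is exactly $\mathrm{Ann}_S(1\in M)$, and is contained in it because $\gamma$ annihilates $1\in{\bb Z}[\tfrac1r]$. Hence $S/S\gamma\cong M$, so $[M]=0$ and the extension exists, for every finite $G$ with $n\mid r$.

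For the converse I assume $G$ abelian and that an extension exists, i.e. the Smith condition holds and $[M]=0$, and must deduce $n\mid r$; equivalently, granting $p\mid n\Rightarrow p\mid r$ but $n\nmid r$, I want $[M]\neq0$. This is vacuous unless $n$ is \emph{not} squarefree, since for squarefree $n$ the Smith condition already yields $n=\mathrm{rad}(n)\mid r$. So fix a prime $p$ with $p^2\mid n$ and $1\le v_p(r)<v_p(n)$, and feed $[M]$ into the Bass--Heller--Swan decomposition
\[
\tilde K_0({\bb Z}[G]\langle t\rangle)\cong\tilde K_0({\bb Z}[G])\oplus K_{-1}({\bb Z}[G])\oplus \mathrm{NK}_0({\bb Z}[G])^{\oplus2},
\]
the point being that ``beyond Smith'' one is precisely in the non-squarefree regime where the Nil summand $\mathrm{NK}_0({\bb Z}[G])$ is nonzero, by Bass--Murthy~\cite{bm}. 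When the $p$-part of $G$ has an invariant factor $p^a$ with $a>v_p(r)$, I would restrict scalars along ${\bb Z}[{\bb Z}_{p^a}]\langle t\rangle\hookrightarrow{\bb Z}[G]\langle t\rangle$ (legitimate since ${\bb Z}[G]$ is ${\bb Z}[{\bb Z}_{p^a}]$-free), which carries $[M]$ to the class of ${\bb Z}[\tfrac1r]$ with trivial ${\bb Z}_{p^a}$-action, reducing to the cyclic prime-power case; there the Bass--Murthy computation of $\tilde K_0({\bb Z}[{\bb Z}_{p^a}]\langle t\rangle)$ shows this class is detected by a nonzero Nil element exactly when $p^a\nmid r$. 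The remaining case — every invariant factor of the $p$-part of $G$ is at most $p^{v_p(r)}$, so $G$ has $p$-rank $\ge2$ — admits no such restriction, and one must compute the Nil-component of $[M]$ over ${\bb Z}[G]$ directly. This last point is the main obstacle: identifying the image of $[M]$ in the Bass--Heller--Swan summand and proving it nonzero for an arbitrary finite abelian $G$ with $n\nmid r$ is a genuine calculation with the Nil-groups (and $K_{-1}$) of integral abelian group rings, and it is where the square-free dichotomy and the comparison with Bass--Murthy~\cite{bm} do the real work; the translation to the $K$-theoretic criterion and the forward direction are routine by comparison.
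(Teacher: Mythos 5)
Your forward direction is correct, and it is a genuinely different (and more elementary) argument than the paper's: you exhibit an explicit two-term free resolution $0\to S\xrightarrow{\ \cdot\gamma\ }S\to M\to 0$ with $\gamma=mt\Sigma_G-1$, $S={\bb Z}[G]\langle t\rangle$, and the verification that $S\gamma=(rt-1)+I_GS=\ker(S\to M)$ is sound; this shows $[M]=0$ directly, whereas the paper gets sufficiency by noting the obstruction is the Swan image $\partial[rt-1]$ and that $rt-1\equiv -1$ in ${\bb Z}_n\langle t\rangle$ when $n\mid r$. That part of your proposal I would accept as written.

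The converse, however, has a genuine gap, and it is exactly where the content of the proposition lies. You reduce to non-squarefree $n$ (correct), but your detection strategy -- restrict scalars to a cyclic subgroup ${\bb Z}_{p^a}\le G$ with $a>v_p(r)$ and invoke Bass--Murthy -- is incomplete in two ways. First, even in the cyclic prime-power case you only assert, without argument, that the class is ``detected by a nonzero Nil element exactly when $p^a\nmid r$''; that is the heart of the matter, not a quotable computation. Second, and more seriously, the case you set aside ($p$-rank $\ge 2$ with every invariant factor at most $p^{v_p(r)}$) cannot be reached by any restriction to cyclic subgroups: take $G={\bb Z}_p\times{\bb Z}_p$ and $r=p$. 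Then the Smith condition holds and $n=p^2\nmid r$, but every cyclic subgroup is ${\bb Z}_p$ with $p\mid r$, so by your own forward direction the restricted class vanishes for all of them -- restriction sees nothing, yet the obstruction is nonzero (this is precisely the example the paper emphasizes right after the proposition: $T(p)$ is not the fixed set of a semi-free ${\bb Z}_p\times{\bb Z}_p$-action). The paper closes exactly this gap without any Nil computation: writing $rt-1=(r-1)\bigl[1+(r-1)^{-1}r(t-1)\bigr]$ in ${\bb Z}_n\langle t\rangle$, it feeds the unit $(r-1)^{-1}(rt-1)$ into the Mayer--Vietoris sequence of the Milnor square for ${\bb Z}[G]$, ${\bb Z}$, ${\bb Z}[G]/\Sigma_G$, ${\bb Z}_n$, compares with units via the determinant (this is where abelianness of $G$ is used), and then applies the Appendix lemma that the units of $({\bb Z}[G]/\Sigma_G)\langle t\rangle$ are monomials: vanishing of the obstruction would force $(r-1)^{-1}(rt-1)$ to be a monomial in ${\bb Z}_n\langle t\rangle$, which fails unless $n\mid r$. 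You need an argument of this global kind, over the full group ring rather than over cyclic subgroups, to complete your converse.
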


We note that, when $G$ is abelian and $r$ is not a multiple of $n$, the counterexample constructed in the proof below actually has non-vanishing obstruction in $NK_0({\bb Z}[G])$. In fact, by Theorem \ref{jones_anr}, the map does not even have semi-free pseudo-equivalence extension in the $G$-ANR category.

\begin{proof}
We have the Bass-Heller-Swan decompositions
\begin{align}
K_1({\bb Z}_n\langle t\rangle)
&= K_1({\bb Z}_n)\oplus 
K_0({\bb Z}_n)\oplus 
NK_1({\bb Z}_n)\oplus 
NK_1({\bb Z}_n), \label{k1} \\
\tilde{K}_0({\bb Z}[G]\langle t\rangle)
&= \tilde{K}_0({\bb Z}[G])\oplus 
K_{-1}({\bb Z}[G])\oplus 
NK_0({\bb Z}[G])\oplus 
NK_0({\bb Z}[G]). \label{k0} 
\end{align}
We also have the pullbacks of rings \cite{milnor} ($\Sigma_G=\sum_{g\in G}g$)
\begin{equation}\label{square}
\begin{CD}
{\bb Z}[G] @>>> 
{\bb Z}[G]/\Sigma_G \\
@VVV @VVV \\
{\bb Z}  @>>> {\bb Z}_n
\end{CD}\quad\quad\quad
\begin{CD}
{\bb Z}[G]\langle t\rangle @>>> 
({\bb Z}[G]/\Sigma_G)\langle t\rangle \\
@VVV @VVV \\
{\bb Z}\langle t\rangle  @>>> {\bb Z}_n\langle t\rangle
\end{CD}
\end{equation}
that induce the Swan homomorphisms\footnote{According to the seminal paper \cite{swan} of Swan where this construction first arose.} $\partial\colon K_1({\bb Z}_n\langle t\rangle)\to \tilde{K}_0({\bb Z}[G]\langle t\rangle)$ that are compatible with the Bass-Heller-Swan decompositions. We note that our obstruction is an image of the Swan homomorphism
\[
[{\bb Z}\langle t\rangle/(rt-1)]=\partial[rt-1],\quad
[rt-1]\in K_1({\bb Z}_n\langle t\rangle).
\]
Since $n$ dividing $r$ implies $[rt-1]=0$, we get the sufficient part of the proposition. 

For the necessary part, we note that the Smith condition requires $n$ dividing some power of $r$. Now we identify the obstruction in the Bass-Heller-Swan decomposition. By the proof of the decomposition in \cite[Theorem 3.2.22]{ro}, we write the automorphism of $R\langle t\rangle$ as an automorphism of $R$ with a nilpotent correction
\[
rt-1
=r(t-1)+(r-1)
=(r-1)[1+(r-1)^{-1}r(t-1)]
\in {\bb Z}_n\langle t\rangle,
\]
where $r-1$ is invertible and $(r-1)^{-1}r$ is nilpotent by the Smith condition. This shows that the element $[rt-1]$ becomes $([r-1],0,0,[(r-1)^{-1}r])$ in the decomposition, and our obstruction is $\partial[r-1]\in \tilde{K}_0({\bb Z}[G])$ and $\partial [(r-1)^{-1}r]\in NK_0({\bb Z}[G])$. We will concentrate on the vanishing of $\partial [(r-1)^{-1}r]$, which is the image of $(r-1)^{-1}(rt-1)\in K_1({\bb Z}_n\langle t\rangle)$ under the Swan homomorphism. 

Now we assume $G$ is abelian. Then we have the determinant maps from $K_1$ to the groups of invertible elements. The Swan homomorphism is part of an exact sequence compatible with the determinants
\[
\begin{CD}
K_1({\bb Z}\langle t\rangle)\oplus 
K_1(({\bb Z}[G]/\Sigma_G)\langle t\rangle)
@>{\alpha}>>  
K_1({\bb Z}_n\langle t\rangle)
@>{\partial}>>  
\tilde{K}_0({\bb Z}[G]\langle t\rangle) \\
@VV{\det}V @VV{\det}V \\
{\bb Z}\langle t\rangle^*\oplus 
({\bb Z}[G]/\Sigma_G)\langle t\rangle^*  
@>{\beta}>> 
{\bb Z}_n\langle t\rangle^*
\end{CD}
\]
The vanishing of $\partial [(r-1)^{-1}r]$ implies that $[(r-1)^{-1}(rt-1)]$ is in the image of $\alpha$. This further implies that $\det[(r-1)^{-1}(rt-1)]=(r-1)^{-1}(rt-1)$ is in the image of $\beta$. In the appendix of this paper, we prove that $({\bb Z}[G]/\Sigma_G)\langle t\rangle^*=({\bb Z}[G]/\Sigma_G)^*\langle t\rangle$. In other words, the invertibles in $({\bb Z}[G]/\Sigma_G)\langle t\rangle$ are monomials. If $n$ does not divide $r$, then $(r-1)^{-1}(rt-1)$ is not a monomial, and we get a contradiction. This proves the necessary part. 
\end{proof}

Example \ref{example1} in the introduction is a direct consequence of Proposition \ref{eg1}. In fact, $T(p)$ is not only not the fixed set of a semi-free ${\bb Z}_{p^2}$-action on homotopy circle, it is also not the fixed set of a semi-free ${\bb Z}_p\times{\bb Z}_p$-action. The same argument given above shows that $T(p^2)$ is not fixed under a semi-free ${\bb Z}_p\times{\bb Z}_{p^2}$-action or ${\bb Z}_p\times{\bb Z}_p\times{\bb Z}_p$-action, etc.

For the sufficiency part of Proposition \ref{eg1}, we give an explicit construction for the special case that $G$ acts freely on a sphere. For example, $G$ is a cyclic group acting on the circle $S^1$ by the standard rotations.

\begin{example}
Suppose $r$ is a multiple of $n=|G|$, and $G$ acts freely on a sphere $S^e$. By replacing $S^e$ with $S^e*S^e=S^{2e+1}$ and taking the join of $G$-actions, we may further assume that the action preserves the orientation of $S^e$. Consider the join $S^{d+e+1}=S^d*S^e$, with the trivial $G$-action on $S^d$ and the given $G$-action on $S^e$. The action is semi-free with fixed set $(S^{d+e+1})^G=S^d$. Let $h$ be a self map of $S^{d+e+1}$ that is the join of the degree $r$ map on $S^d$ and the identity map on $S^e$. Then $h$ is a $G$-map of degree $r$. For any free point $x\in S^{d+e+1}-S^d$, let $D$ be a small disk around $x$, such that the action of $G$ on $D$ gives disjoint copies. By shrinking the boundary $\partial D$ of $D$ to the point $x$, we get a map $S^{d+e+1}\to S^{d+e+1}\vee_x(D/\partial D)$. Combining the identity on $S^{d+e+1}$ and a homeomorphism $D/\partial D\to S^{d+e+1}$, we get a map $S^{d+e+1}\to S^{d+e+1}\vee_xS^{d+e+1}\to S^{d+e+1}$. If we do this for all $G$ copies of $D$, then we get a $G$-map $h'\colon S^{d+e+1}\to S^{d+e+1}\vee_{Gx}G(D/\partial D)\to S^{d+e+1}$. By choosing suitable homeomorphism $D/\partial D\to S^{d+e+1}$, the degree of $h'$ is $r+n$ or $r-n$. By repeating the construction for several points in $S^{d+e+1}-S^d$, we get a $G$-map $h''\colon S^{d+e+1}\to S^{d+e+1}$ of degree $r+an$ for any integer $a$. Since $r$ is a multiple of $n$, we take $a=-\frac{r}{n}$ and get a $G$-map $h''\colon S^{d+e+1}\to S^{d+e+1}$ of degree $0$. On the other hand, the modification happens only on the free part of $S^{d+e+1}$. Therefore the restriction of $h''$ on the fixed part is still the original degree $r$ map $S^d\to S^d$. The mapping torus $T(h'')$ has semi-free $G$-action with fixed set $T(r)$, and $T(h'')\to S^1$ extends $T(r)\to S^1$. Moreover, since the degree of $h''$ is $0$, the map $T(h'')\to S^1$ is a homotopy equivalence.
\end{example}

Now we turn to another application showing other phenomena. Suppose $n$ is not a prime power. Then $n=n_1n_2$, with $n_1,n_2>1$ and coprime. Let $a$ satisfy $a=1$ mod $n_1$ and $a=0$ mod $n_2$. Then $b=1-a$ satisfies $b=0$ mod $n_1$ and $b=1$ mod $n_2$. Let $T(a,b)$ be the  double torus of two maps of $S^d$ to itself of respective degrees $a,b$. Let $f\colon T(a,b)\to S^1$ be the natural map. We consider the pseudo-equivalence extension of $f$ for the action by the cyclic group $G={\bb Z}_n$.

Similar to the mapping torus in the earlier example, the only non-trivial ${\bb Z}\langle t\rangle$-homology of $f$ is 
\[
H_d(f;{\bb Z}\langle t\rangle)
={\bb Z}\langle t\rangle/(at-b).
\]
Since $(at-b)(at^{-1}-b)=a^2=1$ mod $n_1$ and $(at-b)(at^{-1}-b)=b^2=1$ mod $n_2$, we have $at-b$ invertible in ${\bb Z}_n\langle t\rangle$. This verifies the Smith condition for $f$. 

\begin{proposition}\label{eg2}
If $n$ is not prime power order, then for suitable $a,b$, the map $T(a,b)\to S^1$ satisfies the Smith condition for the cyclic group $G={\bb Z}_n$, but has no semi-free pseudo-equivalence extension. 
\end{proposition}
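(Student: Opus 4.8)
The plan is to mimic the proof of Proposition \ref{eg1}, except that the obstruction will now be detected in the $K_{-1}$-summand of the Bass--Heller--Swan decomposition rather than in an $NK_0$-summand. The Smith condition for $f\colon T(a,b)\to S^1$ has already been checked -- it is precisely the invertibility of $at-b$ in $\mathbb{Z}_n\langle t\rangle$ -- so by Theorem \ref{trivialtarget} the map $f$ admits a semi-free pseudo-equivalence extension if and only if
\[
[\mathbb{Z}\langle t\rangle/(at-b)]=\partial[at-b]\in\tilde{K}_0(\mathbb{Z}[G]\langle t\rangle)
\]
vanishes, where $\partial$ is the Swan homomorphism coming from the arithmetic square \eqref{square} and $[at-b]\in K_1(\mathbb{Z}_n\langle t\rangle)$. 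So the goal is to show $\partial[at-b]\ne 0$.

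The first step is to locate $[at-b]$ in the decomposition \eqref{k1}. Writing $\mathbb{Z}_n=\mathbb{Z}_{n_1}\times\mathbb{Z}_{n_2}$ by the Chinese Remainder Theorem, the congruences defining $a$ and $b$ give $at-b=(t,-1)$, whence $[at-b]=[t]+[-1]$. The summand $[-1]$ is the class of a constant unit, so it lies in the $K_1(\mathbb{Z}_n)$-part of \eqref{k1}; the summand $[t]$ is the image of the rank-one free class under the canonical injection $K_0(\mathbb{Z}_{n_1})\hookrightarrow K_1(\mathbb{Z}_{n_1}\langle t\rangle)$, so it contributes the class $e_1:=[\mathbb{Z}_{n_1}]\in K_0(\mathbb{Z}_{n_1})\oplus K_0(\mathbb{Z}_{n_2})=K_0(\mathbb{Z}_n)$ to the $K_0(\mathbb{Z}_n)$-part (here $\mathbb{Z}_{n_1}$ is viewed as the projective $\mathbb{Z}_n$-module cut out by the idempotent $(1,0)$). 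In particular the two $NK_1(\mathbb{Z}_n)$-components of $[at-b]$ vanish, so -- in contrast to Proposition \ref{eg1} -- the determinant/monomial argument sees nothing and the entire content lives in the $K_{-1}$ direction. Since $\partial$ is compatible with the decompositions \eqref{k1} and \eqref{k0}, the component of $\partial[at-b]$ in the $K_{-1}(\mathbb{Z}[G])$-summand of \eqref{k0} is $\partial_{-1}(e_1)$, where $\partial_{-1}\colon K_0(\mathbb{Z}_n)\to K_{-1}(\mathbb{Z}[G])$ is the connecting homomorphism of the Mayer--Vietoris sequence of the Milnor square $\mathbb{Z}[G]=\mathbb{Z}\times_{\mathbb{Z}_n}\mathbb{Z}[G]/\Sigma_G$, extended to negative $K$-theory. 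As this is a direct summand of $\partial[at-b]$, it suffices to show $\partial_{-1}(e_1)\ne 0$ in $K_{-1}(\mathbb{Z}[\mathbb{Z}_n])$.

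For that last point, exactness identifies $\ker\partial_{-1}$ with the image of $K_0(\mathbb{Z})\oplus K_0(\mathbb{Z}[G]/\Sigma_G)$ in $K_0(\mathbb{Z}_n)=\mathbb{Z}^2$, the two coordinates being the ranks over the semilocal factors $\mathbb{Z}_{n_1}$ and $\mathbb{Z}_{n_2}$. A free $\mathbb{Z}$-module of rank $m$ maps to $(m,m)$; and because $\mathbb{Z}[\mathbb{Z}_n]/\Sigma_G=\mathbb{Z}[t]/(1+t+\cdots+t^{n-1})$ is a connected ring, every finitely generated projective over it has one well-defined rank $r$, hence also maps into the diagonal $\mathbb{Z}\cdot(1,1)$. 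Thus $\ker\partial_{-1}$ is exactly the diagonal, and $\partial_{-1}(e_1)=\partial_{-1}(1,0)$ generates the infinite cyclic group $\mathbb{Z}^2/\mathbb{Z}(1,1)$ sitting inside $K_{-1}(\mathbb{Z}[\mathbb{Z}_n])$, so it is nonzero. Consequently $\partial[at-b]\ne 0$ and $T(a,b)\to S^1$ has no semi-free pseudo-equivalence extension; as a byproduct one sees that $K_{-1}(\mathbb{Z}[\mathbb{Z}_n])$ contains a copy of $\mathbb{Z}$ whenever $n$ is not a prime power, matching the remark in the introduction.

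The main obstacle is this last step. One has to know that the Mayer--Vietoris sequence of the arithmetic (conductor) square prolongs into negative $K$-theory and that its connecting map agrees with the $K_{-1}$-component of the Bass--Heller--Swan-compatible Swan homomorphism; and one has to check that $\mathbb{Z}[t]/(1+t+\cdots+t^{n-1})$ is connected, i.e.\ that the cyclotomic branches $\mathbb{Z}[\zeta_d]$ for $d\mid n$, $d>1$, are glued along the primes dividing $n$ into a scheme with connected spectrum -- connectedness being exactly what forces the projectives to have constant rank and so pins $\ker\partial_{-1}$ down to the diagonal. Everything else (the identification of the obstruction with $\partial[at-b]$, and compatibility of $\partial$ with the Bass--Heller--Swan splittings) is as in Proposition \ref{eg1}.
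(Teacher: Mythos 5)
Your proposal is correct in substance and follows the paper's reduction exactly up to the last step: identify the obstruction as $\partial[at-b]$ under the Swan homomorphism of the conductor square, locate $[at-b]$ in the Bass--Heller--Swan decomposition so that (up to a harmless constant unit in the $K_1({\bb Z}_n)$-summand, which the paper records as $0$ and you record as $[-1]$ -- immaterial either way, since it only affects the $\tilde{K}_0({\bb Z}[G])$-summand) everything rides on the class $[a{\bb Z}_n]=[{\bb Z}_{n_1}]\in K_0({\bb Z}_n)$ and its image under the connecting map $\partial_{-1}\colon K_0({\bb Z}_n)\to K_{-1}({\bb Z}[{\bb Z}_n])$. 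Where you diverge is the nonvanishing of $\partial_{-1}[{\bb Z}_{n_1}]$: the paper invokes the structural facts of Bass--Murthy and Swan--Evans ($\tilde{K}_0({\bb Z})=K_{-1}({\bb Z})=0$, $\tilde{K}_0({\bb Z}[{\bb Z}_n]/\Sigma)$ finite, $\tilde{K}_0({\bb Z}_n)$ and $K_{-1}({\bb Z}[{\bb Z}_n]/\Sigma)$ free abelian) to conclude that $\tilde{K}_0({\bb Z}_n)$ embeds as a direct summand of $K_{-1}({\bb Z}[{\bb Z}_n])$, which moreover gives the non-divisibility used in the remark about the $G$-ANR category; you instead compute $\ker\partial_{-1}$ directly as the image of $K_0({\bb Z})\oplus K_0({\bb Z}[{\bb Z}_n]/\Sigma)$ and show this image is the diagonal because ${\bb Z}[t]/(1+t+\cdots+t^{n-1})$ has connected spectrum (the cyclotomic branches ${\bb Z}[\zeta_d]$, $d\mid n$, $d>1$, are glued along primes dividing $n$, and the divisor graph is connected), so projectives have constant rank. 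This is more elementary -- it avoids finiteness of the class group -- but yields only injectivity of the induced map on $\tilde{K}_0({\bb Z}_n)$, not the splitting or non-divisibility. Two small corrections: $K_0({\bb Z}_n)\cong{\bb Z}^l$ with one factor per distinct prime of $n$, not ${\bb Z}^2$, and ``rank over the semilocal factor ${\bb Z}_{n_i}$'' is not well defined for an arbitrary projective when $n_i$ is not a prime power (use the rank at a chosen prime of each factor, or just work in ${\bb Z}^l$); the argument is unaffected, since $[{\bb Z}_{n_1}]$ has coordinates $1$ at primes dividing $n_1$ and $0$ at primes dividing $n_2$, hence lies off the diagonal, giving $\partial_{-1}[{\bb Z}_{n_1}]\neq 0$ exactly as needed. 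The two background facts you flag (extension of the Mayer--Vietoris sequence of a Milnor square to negative $K$-theory, and its compatibility with the Bass--Heller--Swan splittings) are also assumed without proof in the paper, so you are at the same level of rigor there.
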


For the counterexample constructed in the proof below, the obstruction effectively lies in the direct summand $K_{-1}({\bb Z}[{\bb Z}_n])\sub \tilde{K}_0({\bb Z}[{\bb Z}_n]\langle t\rangle)$ according to \eqref{k0}. By Theorem \ref{jones_anr}, although the map has no semi-free pseudo-equivalence extension in the $G$-complex category, it does have semi-free pseudo-equivalence extension in the $G$-ANR category.

\begin{proof}
The obstruction for pseudo-equivalence extension is 
\[
[{\bb Z}\langle t\rangle/(at-b)]\in \tilde{K}_0({\bb Z}[{\bb Z}_n]\langle t\rangle).
\]
This is the image of 
\[
[at-b]\in {\bb Z}_n\langle t\rangle^* 
\subset K_1({\bb Z}_n\langle t\rangle)
\]
under the Swan homomorphism. We carry out the argument similar to Proposition \ref{eg1}. 

We have $a(1-a)=ab=0$ mod $n$. This means $a^2=a$ mod $n$, or $a$ is an idempotent mod $n$. In particular, $a{\bb Z}_n$ is a projective ${\bb Z}_n$-module. By the proof of \cite[Theorem 3.2.22]{ro}, the obstruction $[at-b]$ on the left of \eqref{k1} corresponds to $(0,[a{\bb Z}_n],0,0)$ on the right. Therefore our obstruction is the image of $[a{\bb Z}_n]\in K_0({\bb Z}_n)$ (as in \eqref{k0}) under the Swan homomorphism $K_0({\bb Z}_n)\to K_{-1}({\bb Z}[{\bb Z}_n])$ induced by the pullback \eqref{square}. By the calculation of \cite{bm}, this element is a non-divisible element of $K_{-1}({\bb Z}[{\bb Z}_n])$. 

Then the Swan homomorphism fits into an exact sequence 
\begin{align*}
&\tilde{K}_0({\bb Z})\oplus \tilde{K}_0({\bb Z}[{\bb Z}_n]/\Sigma_{{\bb Z}_n})
\to \tilde{K}_0({\bb Z}_n) \to K_{-1}({\bb Z}[{\bb Z}_n]) \\
\to &
K_{-1}({\bb Z})\oplus K_{-1}({\bb Z}[{\bb Z}_n]/\Sigma_{{\bb Z}_n})
\to K_{-1}({\bb Z}_n).
\end{align*}
By \cite{bm,se}, we have $\tilde{K}_0({\bb Z})=K_{-1}({\bb Z})=0$, $\tilde{K}_0({\bb Z}[{\bb Z}_n]/\Sigma_{{\bb Z}_n})$ is finite, and $\tilde{K}_0({\bb Z}_n)$ and $K_{-1}({\bb Z}[{\bb Z}_n]/\Sigma_{{\bb Z}_n})$ are free abelian. Therefore $\tilde{K}_0({\bb Z}_n)$ embeds as a direct summand of $K_{-1}({\bb Z}[{\bb Z}_n])$.

If $n=p_1^{m_1}\dots p_l^{m_l}$ is the decomposition into distinct primes, then
\[
K_0({\bb Z}_n)
=\oplus_{i=1}^l K_0({\bb Z}_{p_i^{m_i}})
=\oplus_{i=1}^l {\bb Z}.
\] 
We note that the projective ${\bb Z}_n$-module $a{\bb Z}_n$ is isomorphic to ${\bb Z}_{n_1}$. Under the isomorphism $K_0({\bb Z}_n)=K_0({\bb Z}_{n_1})\oplus K_0({\bb Z}_{n_2})$, $[a{\bb Z}_n]\in K_0({\bb Z}_n)$ corresponds to $([{\bb Z}_{n_1}],0)\in K_0({\bb Z}_{n_1})\oplus K_0({\bb Z}_{n_2})$. If we start by choosing $n_1=p_1^{m_1}$ and $n_2=\frac{n}{n_1}$, then the obstruction for $f\colon T(a,b)\to S^1$ is the image of $(1,0,\dots,0)\in K_0({\bb Z}_n)$ under the injective Swan homomorphism. Similarly, we can make other choices of $n_1,n_2$, such that the obstructions for the corresponding $f\colon T(a,b)\to S^1$ are the images of other ``unit vectors'' in $K_0({\bb Z}_n)$. The upshot is that, if $n$ is not a prime power, then we can construct $f\colon F\to S^1$ satisfying the Smith condition, and the obstruction is a nonzero element in $K_{-1}({\bb Z}[{\bb Z}_n])$.
\end{proof}

The counterexamples for Propositions \ref{eg1} and \ref{eg2} can fit into other spaces.

\begin{theorem}\label{fj1}
Suppose $Y$ is a finite complex with torsionless $\pi=\pi_1Y$. Suppose the Farrell-Jones conjecture holds for $\pi$. Suppose $G$ is finite cyclic, and $|G|$ is not a prime and has no square factor. Then there is a map $F\to Y$ satisfying the Smith condition, but has no semi-free pseudo-equivalent extension if and only if $H_1\pi\ne 0$.
\end{theorem}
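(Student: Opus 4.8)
The plan is to use the Farrell--Jones conjecture to compute $\tilde{K}_0(\mathbb{Z}[\pi\times G])$ exactly, to locate inside it the obstruction of Theorem \ref{trivialtarget}, and then to realize a nonzero surviving class by promoting the circle examples of Proposition \ref{eg2} to $Y$. As $\pi$ is torsionfree and satisfies Farrell--Jones, so does $\pi\times G$, the conjecture being inherited by products with the finite group $G$; every finite subgroup of $\pi\times G$ is conjugate into $1\times G$, and the Nil terms entering the virtually-cyclic-to-finite reduction are summands of $NK_*(\mathbb{Z}[V_0])$ for cyclic subgroups $V_0\subset G$, which vanish since $|G|$ is square-free (Bass--Murthy \cite{bm}). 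Hence the assembly map identifies $K_*(\mathbb{Z}[\pi\times G])$ with $H_*(B\pi;\mathbf{K}(\mathbb{Z}G))$. By Carter's theorem \cite{carter2}, $K_{-i}(\mathbb{Z}G)=0$ for $i\ge 2$, so in total degree $0$ the Atiyah--Hirzebruch spectral sequence collapses to its single surviving term $H_1(\pi;K_{-1}(\mathbb{Z}G))=H_1\pi\otimes K_{-1}(\mathbb{Z}G)$, giving
\[
\tilde{K}_0(\mathbb{Z}[\pi\times G])\;\cong\;\tilde{K}_0(\mathbb{Z}G)\;\oplus\;\bigl(H_1\pi\otimes K_{-1}(\mathbb{Z}G)\bigr),
\]
with the first summand the image of $\tilde{K}_0(\mathbb{Z}G)$ under $G\hookrightarrow\pi\times G$ and the second the image of the assembly map. (Farrell--Jones for $\pi$ alone gives $\tilde{K}_0(\mathbb{Z}\pi)=0$, so the forgetful constraint noted after Theorem \ref{trivialtarget} is automatic here.)

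Next I would show that the $\tilde{K}_0(\mathbb{Z}G)$-component of the obstruction always vanishes, so that the interesting part lives in $H_1\pi\otimes K_{-1}(\mathbb{Z}G)$. For any $f\colon F\to Y$ satisfying the Smith condition, the obstruction $[C_*(\tilde f)]$ maps, under the augmentation $\pi\to 1$, to the class in $\tilde{K}_0(\mathbb{Z}G)$ of the integral mapping cone $C_*(f;\mathbb{Z})$, a finite $\mathbb{F}_p$-acyclic complex of free abelian groups with trivial $G$-action; by Jones's theorem \cite{jones} --- the case $Y=\mathrm{pt}$ of Theorem \ref{trivialtarget} --- this class is zero. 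Thus $[C_*(\tilde f)]\in H_1\pi\otimes K_{-1}(\mathbb{Z}G)$ for every such $f$. In particular, if $H_1\pi=0$ this group vanishes, so every Smith-admissible $f$ has a semi-free pseudo-equivalence extension; this is the ``only if'' direction.

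For the ``if'' direction, suppose $H_1\pi\ne 0$. Since $|G|$ is square-free with at least two prime factors, $K_{-1}(\mathbb{Z}G)=K_{-1}(\mathbb{Z}[\mathbb{Z}_{|G|}])$ contains the nonzero free abelian group $\tilde{K}_0(\mathbb{Z}_{|G|})$ as a direct summand (the computation recalled in the proof of Proposition \ref{eg2}), so $H_1\pi\otimes K_{-1}(\mathbb{Z}G)\ne 0$, and it remains to realize a nonzero element of it as an obstruction. Choose a nontrivial homomorphism $\phi\colon\pi\to A$ onto a cyclic group --- with $A=\mathbb{Z}$ when $\mathrm{Hom}(\pi,\mathbb{Z})\ne 0$, and otherwise $A$ a nontrivial cyclic quotient of the finite group $H_1\pi$ --- and realize it by a map $q\colon Y\to BA$, using a high-dimensional lens space $L$ as a finite model of $BA$ when $A$ is finite. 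Let $F_0\to BA$ (respectively $F_0\to L$) be the example of Proposition \ref{eg2} (respectively its lens-space analogue), built from high-dimensional spheres so that its homotopy fibre is simply connected and its obstruction is a nonzero element of $K_{-1}(\mathbb{Z}G)\subset\tilde{K}_0(\mathbb{Z}[A\times G])$. Pulling this back along $q$ gives a finitely dominated space over $Y$ with fundamental group $\pi$; its Wall obstruction lies in $\tilde{K}_0(\mathbb{Z}\pi)=0$, so it is homotopy equivalent to a finite complex $F$, yielding $f\colon F\to Y$. The Smith condition for $f$ follows from that for $F_0\to BA$, and naturality of the assembly identification under $\phi\times\mathrm{id}$ carries the obstruction of $f$ to the nonzero obstruction of $F_0\to BA$; hence $f$ satisfies the Smith condition but admits no semi-free pseudo-equivalence extension.

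The main obstacle is the last step: constructing the lens-space counterparts of Proposition \ref{eg2} in the case $\mathrm{Hom}(\pi,\mathbb{Z})=0$, and --- above all --- verifying that the pulled-back complex $C_*(\tilde f)$ retains the Smith condition and carries the claimed obstruction, i.e. that the geometric pullback is compatible with the functoriality of the assembly map. The remaining, more routine point is the verification in the first step that Farrell--Jones passes to $\pi\times G$ and that the relevant Nil contributions vanish, so that the displayed splitting of $\tilde{K}_0(\mathbb{Z}[\pi\times G])$ is valid.
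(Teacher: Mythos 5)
Your $K$-theoretic setup and your ``only if'' half are essentially the paper's argument. You use the same splitting $\tilde{K}_0(\mathbb{Z}[\pi\times G])\cong \tilde{K}_0(\mathbb{Z}G)\oplus H_1(\pi;K_{-1}(\mathbb{Z}G))$ (square-free order kills the Nil terms, Carter kills $K_{-i}$ for $i\ge 2$), and you correctly identify the $\tilde{K}_0(\mathbb{Z}G)$-component of $[C_*(\tilde f)]$ with the class of the integral mapping cone, a finite $\mathbb{Z}_{|G|}$-acyclic complex with trivial $G$-action; your appeal to Jones's theorem via Theorem \ref{trivialtarget} with $Y=\mathrm{pt}$ to kill this class is equivalent to the paper's citation of Swan's vanishing theorem for the Swan homomorphism of cyclic groups. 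So when $H_1\pi=0$ the obstruction always vanishes, as in the paper.

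The gap is in the realization step when $H_1\pi\neq 0$, and it is not merely the unverified compatibility you flag at the end: the construction has the wrong variance. The obstruction $[C_*(\tilde f)]$ pushes forward covariantly along maps \emph{into} $Y$ (induction on group rings); it does not pull back along a map $q\colon Y\to BA$ \emph{out of} $Y$. Concretely, the homotopy pullback $Y\times_{BA}F_0$ need not be finitely dominated (for $Y=S^1\vee S^2$ and $A=\mathbb{Z}$ the diagonal $\mathbb{Z}\pi$-module $H_*(\tilde Y)\otimes H_*(\tilde F_0)$ is not finitely generated, since $H_d(\tilde F_0)\cong\mathbb{Z}\langle t\rangle/(at-b)$ is not finitely generated over $\mathbb{Z}$), so ``its Wall obstruction lies in $\tilde{K}_0(\mathbb{Z}\pi)=0$'' has no content in general; and even when the pullback is finite your claimed identity $(\phi\times\mathrm{id})_*[C_*(\tilde f)]=[C_*(\tilde f_0)]$ fails. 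Test case: $Y=T^2$, $q$ the projection to $S^1$. The pullback of $f_0\colon F_0\to S^1$ is $\mathrm{id}_{S^1}\times f_0\colon S^1\times F_0\to T^2$, whose chain cone is $C_*(\mathbb{R})\otimes C_*(\tilde f_0)$, so its class is $\chi(S^1)\cdot\mathrm{Ind}[C_*(\tilde f_0)]=0$: the construction produces an \emph{unobstructed} map even though $f_0$ is obstructed (the familiar ``crossing with $S^1$ kills finiteness obstructions'' phenomenon). The paper's construction goes the other way and avoids all of this: choose a loop $c\colon S^1\to Y$ whose class is a generator of (a nontrivial cyclic summand of) $H_1\pi$ --- this exists whenever $H_1\pi\neq 0$, needs no case distinction or lens-space analogue, and $c_*$ is injective on $\pi_1$ since $\pi$ is torsion-free --- and glue the Proposition \ref{eg2} example to $Y$ along $c$, giving $f'\colon F_0\cup_{S^1}Y\to Y$ which is the identity off the glued piece (so the Smith condition is immediate). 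By excision the obstruction of $f'$ is the image of $[C_*(\tilde f_0)]$ under the genuine induction map $\tilde{K}_0(\mathbb{Z}[\mathbb{Z}\times G])\to\tilde{K}_0(\mathbb{Z}[\pi\times G])$, which on the relevant summands is $H_1(S^1;K_{-1}(\mathbb{Z}G))\to H_1(\pi;K_{-1}(\mathbb{Z}G))$ and is nonzero on the non-divisible class produced in Proposition \ref{eg2}. You would need to replace your pullback step by such a gluing argument for the ``if'' direction to go through.
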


The condition on the order of $G$ is that $|G|$ is a product of more than one distinct primes.

\begin{proof}
The hypothesis that $|G|$ has no square factor implies that $NK_0({\bb Z}[G])=0$. Then for torsionless $\pi$, the Farrell-Jones conjecture asserts that
\[
K_0({\bb Z}[\pi\times G])
=K_0({\bb Z}[G])\oplus H_1(\pi;K_{-1}({\bb Z}[G])).
\]
If $H_1\pi=0$, then $H_1(\pi;K_{-1}({\bb Z}[G]))=0$, and the Swan homomorphism relevant to constructing the $G$-action lies in the $K_0({\bb Z}[G])$ part. Thus we are reduced to the classical Swan homomorphism. By \cite[Corollary 6.1]{swan}, the Swan homomorphism vanishes for cyclic $G$.

If $H_1\pi\ne 0$, then a generator of $H_1\pi=H_1Y$ can be represented by a loop $S^1\to Y$. By Proposition \ref{eg2}, there is a map $f\colon F\to S^1$ satisfying the Smith condition but has non-vanishing pseudo-equivalent extension obstruction $[C(\tilde{f})]\in \tilde{K}_{-1}({\bb Z}[G])=H_1(S^1,\tilde{K}_{-1}({\bb Z}[G]))\sub \tilde{K}_0({\bb Z}[G]\langle t\rangle)$, where $t$ is the generator of $\pi_1S^1$. We use the loop $S^1\to Y$ to extend $f$ to $f'\colon F\cup_{S_1}Y\to Y$. Then $f'$ also satisfies the Smith condition, and the pseudo-equivalent extension obstruction $[C(\tilde{f'})]$ for $f'$ is the image of $[C(\tilde{f})]$ under the homomorphism
\[
\tilde{K}_{-1}({\bb Z}[G])=H_1(S^1,\tilde{K}_{-1}({\bb Z}[G]))
\to H_1(\pi,\tilde{K}_{-1}({\bb Z}[G])).
\]
Since the circle represents to a generator of $H_1\pi$, the image obstruction is still nonzero. 
\end{proof}

\begin{theorem}\label{fj2}
Suppose $Y$ is a finite complex with torsionless $\pi=\pi_1Y$, the Farrell-Jones conjecture holds for $\pi$, and $\pi$ has maximal infinite cyclic subgroup $C$, such that the normaliser of $C$ is $C$ itself. Suppose $G$ is a finite abelian group, such that $|G|$ has square factor. Then there is a map $F\to Y$ satisfying the Smith condition, but has no semi-free pseudo-equivalent extension.
\end{theorem}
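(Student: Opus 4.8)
The plan is to run the strategy of the proof of Theorem \ref{fj1}, but feeding in the mapping‑torus counterexample of Proposition \ref{eg1} rather than the double‑torus counterexample of Proposition \ref{eg2}, since the hypothesis that $|G|$ has a square factor is exactly what makes $NK_0({\bb Z}[G])\neq 0$ (Bass--Murthy \cite{bm}). Write $n=|G|$ and fix a prime $p$ with $p^2\mid n$; choose $r=p\cdot\prod_{q\mid n,\ q\neq p}q$, so that every prime dividing $n$ divides $r$ but $n\nmid r$. Then $f\colon T(r)\to S^1$ satisfies the Smith condition for $G$, and since $G$ is abelian the converse half of Proposition \ref{eg1} applies: $f$ has no semi‑free pseudo‑equivalence extension, and, as noted right after Proposition \ref{eg1}, the obstruction $[{\bb Z}\langle t\rangle/(rt-1)]$ has nonzero image in the Nil summand $NK_0({\bb Z}[G])$ of $\tilde K_0({\bb Z}[G]\langle t\rangle)=\tilde K_0({\bb Z}[{\bb Z}\times G])$ appearing in \eqref{k0}.

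Next I would transplant this counterexample into $Y$. Let $C\leq\pi$ be the maximal infinite cyclic subgroup with $N_\pi(C)=C$ supplied by hypothesis, choose a generator and realise it by a loop $\iota\colon S^1\to Y$ (so $\iota_*$ is injective with image $C$), and form $F'=T(r)\cup_{S^1}Y$, the homotopy pushout of $T(r)\xleftarrow{f}S^1\xrightarrow{\iota}Y$, with $f'\colon F'\to Y$ equal to the identity on the $Y$‑part and to $\iota\circ f$ on the $T(r)$‑part. Since ${\bb F}_p\pi$ is free over ${\bb F}_p[C]$, a Mayer--Vietoris argument shows that the Smith condition for $f$ forces $\tilde H_*(F';{\bb F}_p\pi)\cong\tilde H_*(Y;{\bb F}_p\pi)$ for all prime factors $p$ of $|G|$, so $f'$ satisfies the Smith condition of Theorem \ref{trivialtarget}. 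Moreover, exactly as in the proof of Theorem \ref{fj1}, the inductive construction of Section \ref{proof} identifies the pseudo‑equivalence obstruction $[C_*(\tilde{f'})]\in\tilde K_0({\bb Z}[\pi\times G])$ with the induced class $\text{Ind}\,[C_*(\tilde f)]$, where $\text{Ind}\colon\tilde K_0({\bb Z}[C\times G])\to\tilde K_0({\bb Z}[\pi\times G])$ comes from $C\times G\hookrightarrow\pi\times G$ (here $Y$ carries the trivial $G$‑action, so $Y^G=Y$, $\Gamma=\pi\times G$, and $C\times G$ is the lift group over the loop $\iota$), and $[C_*(\tilde f)]\in\tilde K_0({\bb Z}[C\times G])=\tilde K_0({\bb Z}[{\bb Z}\times G])$ is the obstruction of Proposition \ref{eg1}.

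The crux is to show that $\text{Ind}$ is split injective on the Bass--Heller--Swan Nil summand $NK_0({\bb Z}[G])^{\oplus 2}\sub\tilde K_0({\bb Z}[C\times G])$, and this is where the Farrell--Jones conjecture \cite{fj} and the condition $N_\pi(C)=C$ are used. First I would record that $V:=C\times G$ is a maximal infinite virtually cyclic subgroup of $\pi\times G$ with trivial Weyl group: if $W$ is a virtually cyclic subgroup with $V\subseteq W\subseteq\pi\times G$, then $W\cap(\pi\times 1)$ is a normal (since $\pi\times1\trianglelefteq\pi\times G$), infinite, torsion‑free virtually cyclic subgroup of $W$ containing $C$, hence is infinite cyclic, hence equals $C$ by maximality of $C$ in $\pi$; but then every element of $W$ normalises $C$, so projects into $N_\pi(C)=C$, whence $W\subseteq C\times G=V$; the same computation gives $N_{\pi\times G}(V)=V$. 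By the L\"uck--Weiermann analysis of the passage from the family of finite subgroups to the family of virtually cyclic subgroups, together with the split injectivity of the ``finite‑to‑virtually‑cyclic'' assembly map (both consequences of the Farrell--Jones conjecture for $\pi\times G$), the Nil contribution of $V$ is a direct summand of $\tilde K_0({\bb Z}[\pi\times G])$ hit faithfully by $\text{Ind}$; since $V$ has orientable type with trivial twisting, this contribution is precisely $NK_0({\bb Z}[G])^{\oplus 2}$. (The hypothesis $N_\pi(C)=C$ is essential: a nontrivial Weyl group $N_\pi(C)/C$ would act on $NK_0({\bb Z}[G])^{\oplus 2}$, e.g.\ by the flip exchanging the two copies, and only the coinvariants would survive.) Therefore the image of our obstruction in $\tilde K_0({\bb Z}[\pi\times G])$ is nonzero, and by Theorem \ref{trivialtarget} the map $f'\colon F'\to Y$ satisfies the Smith condition but admits no semi‑free pseudo‑equivalence extension, proving the theorem.

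I expect the main obstacle to be the third step: extracting from the Farrell--Jones conjecture that the full Bass--Heller--Swan Nil term of $C\times G$ appears as a direct summand of $\tilde K_0({\bb Z}[\pi\times G])$ realised by induction. This needs the L\"uck--Weiermann decomposition of the relative assembly over virtually cyclic subgroups, the split injectivity of the finite‑to‑virtually‑cyclic assembly, and the bookkeeping of virtually cyclic subgroups of $\pi\times G$ (in particular that no virtually cyclic subgroup properly contains $C\times G$) — all of which rely on $N_\pi(C)=C$. By contrast, the gluing construction, the persistence of the Smith condition under it, and the identification of the obstruction with an induced class are straightforward variants of what is already done for Theorem \ref{fj1}.
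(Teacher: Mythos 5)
Your proposal follows essentially the same route as the paper: glue the mapping-torus counterexample of Proposition \ref{eg1} (with $r$ divisible by every prime of $n=|G|$ but $n\nmid r$) into $Y$ along a loop representing the maximal cyclic subgroup $C$, identify the resulting obstruction as the class induced from ${\bb Z}[C\times G]$, and use the Farrell--Jones conjecture together with the split injectivity of the relative (finite-to-virtually-cyclic) assembly and the condition $N_\pi(C)=C$ to see that the two copies of $NK_0({\bb Z}[G])$ attached to $C\times G$ survive as a direct summand of $\tilde K_0({\bb Z}[\pi\times G])$, so the induced obstruction is nonzero. This matches the paper's argument (which cites Bartels and Davis--L\"uck for the same decomposition), with your write-up merely supplying more of the virtually-cyclic subgroup bookkeeping that the paper leaves implicit.
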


The theorem implies that, if every a map $F\to Y$ satisfying the Smith condition has semi-free pseudo-equivalent extension for ${\bb Z}_n$-action, then $n$ is a product of at least two distinct primes.

\begin{proof}
The proof is similar to Theorem \ref{fj1}, except that the Farrell-Jones conjecture is more complicated because $NK_0({\bb Z}[G])\ne 0$. In this case, by \cite{bartels}, the formula for $K_0({\bb Z}[\pi\times G])$ has another factor (i.e., a direct summand), namely $H_*^{\pi\times G}(E_{{\mc V}{\mc C}}(\pi\times G), E_{{\mc F}{\mc I}{\mc N}}(\pi\times G); {\bf K})$. Here $H_*^{\pi\times G}$ is the homology over the category of $\pi\times G$-orbits by Davis and L\"uck \cite{davis-luck}, $E_{{\mc V}{\mc C}}$ is the classifying space for the family of virtually cyclic subgroups, $E_{{\mc F}{\mc I}{\mc N}}$ is the classifying space for finite subgroups, and ${\bf K}$ is the non-connective $K$-theory spectrum of the isotropy groups of points. Note that this relative homology is concentrated on points with infinite isotropy. Under the condition of $C$ normalised only by itself, by \cite{fj}, the set of points with isotropy $C$ contributes two copies of $NK_0({\bb Z}[G])$, and the glueing trick using our example from Proposition \ref{eg1} constructs an obstructed example. 

Since this element is nonzero in $\tilde{K}_0^{G,top}$ in the sense of \cite{quinn2,steinberger}, it even obstructs the existence of an ANR-action.
\end{proof}

Finally, we study the pseudo-equivalence extension of a map between $3$-dimensional lens spaces. Denote $L_3(q)=L_3(q;1,1)$.

\begin{proposition}\label{lens}
Suppose $f\colon L_3(kp) \to L_3(p)$ is a degree $d$ map, where $p$ is a prime not dividing $d$ and $k$. Then $f$ has a pseudo-equivalent for ${\bb Z}_p$-action if and only if $d^{p-1}=k^{p-1}$ mod $p^2$.
\end{proposition}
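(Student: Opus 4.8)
The plan is to apply Theorem~\ref{trivialtarget} to $F=L_3(kp)$ and $Y=L_3(p)$ with the trivial $G={\bb Z}_p$-action; here $\pi=\pi_1Y={\bb Z}_p$, so the $K$-theoretic obstruction will live in $\tilde K_0({\bb Z}[{\bb Z}_p\times{\bb Z}_p])$. First I would pin down the $\pi$-cover. Since $p\nmid d$, the map $f$ does not lift along the $p$-fold cover $S^3=\widetilde{L_3(p)}\to L_3(p)$ (a lift would factor $f$ through a degree-$p$ map, forcing $p\mid d$), so $f_*\colon{\bb Z}_{kp}\to{\bb Z}_p$ is surjective with kernel $\langle p\rangle\cong{\bb Z}_k$. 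Thus the pullback cover $\tilde F$ is the $p$-fold cover $L_3(k)$ of $L_3(kp)$, and comparing $\deg\bigl(L_3(k)\to L_3(kp)\xrightarrow{f}L_3(p)\bigr)=pd$ with $\deg\bigl(L_3(k)\xrightarrow{\tilde f}S^3\to L_3(p)\bigr)=p\deg\tilde f$ shows that $\tilde f\colon L_3(k)\to S^3$ has degree $d$.

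Next I would note that the Smith condition (condition~(1)) is automatic under the hypotheses: since $p\nmid k$ we have $H_*(L_3(k);{\bb F}_p)\cong H_*(S^3;{\bb F}_p)$, and since $p\nmid d$ the map $\tilde f$ acts by a unit on $\tilde H_3(\,\cdot\,;{\bb F}_p)={\bb F}_p$, so $\tilde f$ is an ${\bb F}_p$-homology equivalence (and $Y^G=Y$ is connected, so the local and global Smith conditions coincide). For the obstruction I would compute the homology of the mapping cone $C_*(\tilde f)$ from the long exact sequence of the degree-$d$ map $\tilde f$: it is concentrated in degrees $2$ and $3$, with $H_2\cong H_1(L_3(k))={\bb Z}_k$ and $H_3\cong\mathrm{coker}(\cdot d\colon{\bb Z}\to{\bb Z})={\bb Z}_d$. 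The $\pi$-actions on $S^3$ and on $L_3(k)$ are by orientation-preserving deck transformations and act trivially on homology (trivially on $H_1$ because $\pi_1L_3(kp)$ is abelian), and $G$ acts trivially by hypothesis, so $H_2$ and $H_3$ are trivial ${\bb Z}[{\bb Z}_p\times{\bb Z}_p]$-modules; as $p\nmid k$ and $p\nmid d$, each is a direct summand of ${\bb Z}[{\bb Z}_p\times{\bb Z}_p]/m$ split off by $\tfrac1{p^2}\Sigma_{{\bb Z}_p\times{\bb Z}_p}$, hence of projective dimension $\le1$. Therefore
\[
[C_*(\tilde f)]=[{\bb Z}_k]-[{\bb Z}_d]\in\tilde K_0({\bb Z}[{\bb Z}_p\times{\bb Z}_p]).
\]

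To evaluate this I would pass to the Swan homomorphism, exactly as in the proofs of Propositions~\ref{eg1} and~\ref{eg2}. For $m$ coprime to $p^2$ the Swan module $(m,\Sigma_{{\bb Z}_p\times{\bb Z}_p})$ is a rank-one projective with $[(m,\Sigma)]=[{\bb Z}_m]$, and this class is $\partial(m)$, where $\partial\colon({\bb Z}/p^2)^*\to\tilde K_0({\bb Z}[{\bb Z}_p\times{\bb Z}_p])$ is the Swan homomorphism of the Milnor square relating ${\bb Z}[{\bb Z}_p\times{\bb Z}_p]$, ${\bb Z}$, ${\bb Z}[{\bb Z}_p\times{\bb Z}_p]/\Sigma$ and ${\bb Z}/p^2$. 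Hence the obstruction equals $\partial(k)-\partial(d)=\partial(kd^{-1})$, and it vanishes precisely when $kd^{-1}\in\ker\partial$. By the Mayer--Vietoris sequence of the square, $\ker\partial$ is the image of $K_1({\bb Z})\oplus K_1({\bb Z}[{\bb Z}_p\times{\bb Z}_p]/\Sigma)$ in $({\bb Z}/p^2)^*$, i.e. the subgroup generated by $-1$ together with the images under the augmentation of the units (indeed all of $K_1$) of the order ${\bb Z}[{\bb Z}_p\times{\bb Z}_p]/\Sigma$.

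The decisive point — essentially the computation of the Swan subgroup of $({\bb Z}_p)^2$ — is that this image equals the subgroup of $p$-th powers $(({\bb Z}/p^2)^*)^p=\{u:u^{p-1}\equiv1\ (\mathrm{mod}\ p^2)\}$, of index $p$; here the exponent $p-1=[{\bb Q}(\zeta_p):{\bb Q}]$ is exactly what the cyclotomic arithmetic of ${\bb Z}[{\bb Z}_p\times{\bb Z}_p]/\Sigma$ (an order in ${\bb Q}(\zeta_p)^{p+1}$) produces, and it is the step that is not formal (cf. \cite{swan,bm}). Granting it, $\partial(kd^{-1})=0$ if and only if $(kd^{-1})^{p-1}\equiv1\pmod{p^2}$, i.e.\ (multiplying by the unit $d^{p-1}$) $k^{p-1}\equiv d^{p-1}\pmod{p^2}$, which together with the automatic Smith condition yields Proposition~\ref{lens}. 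Thus the main obstacle is the last identification of $\ker\partial$: unlike the cyclic case, where the Swan homomorphism vanishes, for $({\bb Z}_p)^2$ one must control which units modulo $p^2$ are hit by $K_1$ of the non-maximal order ${\bb Z}[{\bb Z}_p\times{\bb Z}_p]/\Sigma$, and this is where the Fermat-quotient shape of the final congruence comes from.
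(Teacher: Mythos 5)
Your proposal follows essentially the same route as the paper's proof: both identify the obstruction of Theorem \ref{trivialtarget} with the Swan image of $k/d\in({\bb Z}/p^2)^*=K_1({\bb Z}_{p^2})$ (via the mapping-cone homology ${\bb Z}_k$ in degree $2$ and ${\bb Z}_d$ in degree $3$, viewed as trivial ${\bb Z}[{\bb Z}_p\times{\bb Z}_p]$-modules with short projective resolutions), and then reduce the question to computing $\ker\partial$ for the Milnor square of ${\bb Z}[{\bb Z}_p\times{\bb Z}_p]$. The single step you explicitly ``grant'' --- that this kernel is exactly the index-$p$ subgroup $\{u:u^{p-1}\equiv1\ (\mathrm{mod}\ p^2)\}$, equivalently that the Swan subgroup of ${\bb Z}_p\times{\bb Z}_p$ is ${\bb Z}_p$ --- is precisely what the paper imports from Ullom \cite[Proposition 3]{ullom}; your ``cf.\ \cite{swan,bm}'' is not the right source for it (Swan's Corollary 6.1 gives \emph{vanishing} of the Swan homomorphism for cyclic groups, the opposite phenomenon), so you should cite Ullom there, after which your argument matches the paper's.
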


\begin{proof}
The obstruction lies in $\tilde{K}_0({\bb Z}[{\bb Z}_p\times {\bb Z}_p])$, where the first ${\bb Z}_p=\pi_1L_3(p)$, and the second ${\bb Z}_p$ is the action group. The obstruction is given by the chain complex of the map $\tilde{f}\colon \tilde{L}_3(kp)\to \tilde{L}_3(p)$ obtained by pulling back along the universal cover $\tilde{L}_3(p)=S^3\to \tilde{L}_3(p)$. We have the long exact sequence 
\begin{align*}
& H_3(L_3(kp);{\bb Z}[{\bb Z}_p])={\bb Z}\to 
H_3(L_3(p);{\bb Z}[{\bb Z}_p])={\bb Z}\to 
H_3(f;{\bb Z}[{\bb Z}_p])={\bb Z}_d \\
\to &
H_2(L_3(kp);{\bb Z}[{\bb Z}_p])=0\to 
H_2(L_3(p);{\bb Z}[{\bb Z}_p])=0\to 
H_2(f;{\bb Z}[{\bb Z}_p])={\bb Z}_k \\
\to &
H_1(L_3(kp);{\bb Z}[{\bb Z}_p])={\bb Z}_{kp}\to 
H_1(L_3(p);{\bb Z}[{\bb Z}_p])={\bb Z}_p\to 
H_1(f;{\bb Z}[{\bb Z}_p])=0.
\end{align*}
We note that $H_*(f;{\bb Z}[{\bb Z}_p])=0,0,{\bb Z}_k,{\bb Z}_d,0,\dots$ are trivial ${\bb Z}[{\bb Z}_p\times {\bb Z}_p]$-modules, and have ${\bb Z}[{\bb Z}_p\times {\bb Z}_p]$-projective resolutions. Therefore the Euler characteristic of $H_*(f;{\bb Z}[{\bb Z}_p])$ gives an element of $K_1({\bb Z}_{p^2})$ ($p^2$ is the order of the group ${\bb Z}_p\times {\bb Z}_p$), and the obstruction is the image of this element under the Swan homomorphism for the group ${\bb Z}_p\times {\bb Z}_p$
\[
K_1({\bb Z}_{p^2})=({\bb Z}_{p^2})^*\to 
\tilde{K}_0({\bb Z}[{\bb Z}_p\times {\bb Z}_p]).
\]
In the multiplicative group $({\bb Z}_{p^2})^*$, the Euler characteristic of $H_*(f;{\bb Z}[{\bb Z}_p])$ is $\frac{k}{d}$. The group $({\bb Z}_{p^2})^*$ is additively isomorphic to ${\bb Z}_p\oplus {\bb Z}_{p-1}$. By \cite[Proposition 3]{ullom}, the image of the Swan homomorphism is an additive group ${\bb Z}_p\sub \tilde{K}_0({\bb Z}[{\bb Z}_p\times {\bb Z}_p])$. Therefore an element $r$ is in the kernel of the Swan homomorphism $({\bb Z}_{p^2})^*\cong {\bb Z}_p\oplus {\bb Z}_{p-1}\to{\bb Z}_p$ if and only if $r^{p-1}=1$. In particular, the pseudo-equivalence extension obstruction $\frac{k}{d}\in ({\bb Z}_{p^2})^*$ for $f$ vanishes if and only if $(\frac{k}{d})^{p-1}=1$ in $({\bb Z}_{p^2})^*$. This gives the condition $d^{p-1}=k^{p-1}$ mod $p^2$ in the proposition. 
\end{proof}

\section{Appendix: Invertibles}

\begin{lemma}\label{app2}
If $G$ is a finite abelian group, then the invertibles of $({\bb Z}[G]/\Sigma_G)\langle t\rangle$ are monomials.
\end{lemma}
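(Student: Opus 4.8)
The statement is equivalent to $R\langle t\rangle^{*}=R^{*}\langle t\rangle$ for $R:={\bb Z}[G]/\Sigma_G$, where $\Sigma_G=\sum_{g\in G}g$, and the plan is to derive it from two structural facts: $R$ is \emph{reduced}, and $\mathrm{Spec}\,R$ is \emph{connected}. For reducedness, note that since $G$ is abelian $g\Sigma_G=\Sigma_G$ for all $g$, so the ideal $(\Sigma_G)$ is just ${\bb Z}\Sigma_G$; as $\Sigma_G$ is a primitive vector in the free ${\bb Z}$-module ${\bb Z}[G]$, the ring $R$ is a ${\bb Z}$-order and embeds in $R\otimes{\bb Q}={\bb Q}[G]/\Sigma_G\cong\prod_{[\chi]\neq 1}{\bb Q}(\zeta_{\mathrm{ord}(\chi)})$, the product taken over $\mathrm{Gal}(\bar{\bb Q}/{\bb Q})$-classes of nontrivial characters $\chi\colon G\to\bar{\bb Q}^{*}$ (Maschke, together with the fact that for $G$ abelian the simple factors of ${\bb Q}[G]$ are the cyclotomic fields ${\bb Q}(\zeta_{\mathrm{ord}(\chi)})$, one per class). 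Hence $R$ is reduced, its minimal primes are the $\mathfrak p_{[\chi]}:=\ker\big(R\twoheadrightarrow{\bb Z}[\zeta_{\mathrm{ord}(\chi)}],\,g\mapsto\chi(g)\big)$, one per class, and $\bigcap_{[\chi]}\mathfrak p_{[\chi]}=0$.

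The passage from these two facts to the conclusion is a general principle: if $S$ is a reduced commutative ring with connected spectrum, then every unit of $S\langle t\rangle$ is a monomial $a t^{k}$ with $a\in S^{*}$. To see this, let $\mathfrak q_1,\dots,\mathfrak q_m$ be the minimal primes of $S$ and recall the standard fact that $\mathrm{Spec}\,S$ is connected iff the graph on the $\mathfrak q_i$, with an edge between two of them whenever they lie in a common maximal ideal (equivalently $V(\mathfrak q_i)\cap V(\mathfrak q_j)\neq\emptyset$), is connected. A unit $u=\sum_k a_k t^{k}$ reduces modulo each $\mathfrak q_i$ to a unit of the Laurent ring over the domain $S/\mathfrak q_i$, hence to a monomial $c_i t^{d_i}$; reducing further modulo a maximal ideal containing two adjacent $\mathfrak q_i,\mathfrak q_j$ — a field, over which units are monomials — forces $d_i=d_j$. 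By connectedness $d_i\equiv d$ is constant, so every $a_k$ with $k\neq d$ lies in $\bigcap_i\mathfrak q_i=0$, and from $u=a_d t^{d}$ being invertible, $a_d\in S^{*}$. Thus everything reduces to proving $\mathrm{Spec}\,R$ is connected.

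For the edges of the graph attached to $R$, I claim that $\mathfrak p_{[\chi]}$ and $\mathfrak p_{[\chi']}$ lie in a common maximal ideal whenever $\chi(\chi')^{-1}$ has prime-power order $\ell^{a}$ with $a\geq 1$. Reducing $\chi,\chi'$ modulo a prime of $\bar{\bb Z}$ over $\ell$ yields characters into $\overline{{\bb F}_\ell}^{\,*}$ that coincide — their ratio, of $\ell$-power order, reduces to the trivial character — with common value, say, $\bar\psi$. The additive extension $g\mapsto\bar\psi(g)$ sends $\Sigma_G$ to $\sum_g\bar\psi(g)$, which is $0$ by character orthogonality if $\bar\psi\neq 1$, and is $|G|\cdot 1=0$ if $\bar\psi=1$ (then $\mathrm{ord}(\chi)$ is a positive power of $\ell$, so $\ell\mid|G|$); hence it descends to a ring map $R\to\overline{{\bb F}_\ell}$ whose kernel is a maximal ideal containing both $\mathfrak p_{[\chi]}$ and $\mathfrak p_{[\chi']}$ (the two relevant reductions factor through these primes). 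Now connectedness: a nontrivial $\chi$ is the product $\chi_{p_1}\cdots\chi_{p_r}$ of its primary components, with $p_1<\dots<p_r$ the primes dividing $\mathrm{ord}(\chi)$; the partial products $\chi_{p_1}\cdots\chi_{p_j}$ ($1\leq j\leq r$) are all nontrivial and consecutive ones differ, as a ratio, by the prime-power-order character $\chi_{p_j}$, so $\mathfrak p_{[\chi]}$ is joined to $\mathfrak p_{[\chi_{p_1}]}$. Finally, any two prime-power-order characters $\alpha$ (order $p^{a}$) and $\beta$ (order $q^{b}$) are joined: directly if $p=q$, since then $\alpha(\beta)^{-1}$ has prime-power order; and through $\mathfrak p_{[\alpha\beta]}$ if $p\neq q$, since $\alpha\beta$ is nontrivial and the two ratios involved are $\beta$ and $\alpha$. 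Hence all minimal primes of $R$ lie in a single connected component, so $\mathrm{Spec}\,R$ is connected and the lemma follows.

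The one genuine obstacle is exactly this connectedness of $\mathrm{Spec}\,R$: the generic fiber $\mathrm{Spec}(R\otimes{\bb Q})$ and each special fiber $\mathrm{Spec}({\bb F}_\ell[G]/\Sigma_G)$ are typically disconnected, so the connectedness cannot be seen at a single residue characteristic and must come from the arithmetic gluing across characteristics — which is precisely what the chaining through characters of composite order supplies. The hypothesis that $G$ is abelian is used both for the reducedness step and to guarantee that there are enough one-dimensional characters to carry out that chaining.
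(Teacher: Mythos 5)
Your proof is correct, and it reaches the lemma by a more structural route than the paper, though the underlying mechanism is the same. The paper embeds ${\bb Z}[G]/\Sigma_G$ into a product of cyclotomic rings ${\bb Z}[\xi_{p^k}]$ indexed by the nontrivial prime-power-order characters, notes that in each factor a unit of the Laurent extension is a monomial of some degree $i(\lambda)$, and equalizes these degrees by two explicit computations: characters belonging to the same prime are compared through the map ${\bb Z}[\xi_{p^k}]\to{\bb F}_p$, $s\mapsto 1$, and characters belonging to distinct primes $p,q$ are compared through a character of order $pq$ together with the computation $\varphi_{pq}(\xi_p)=q/(1+\xi_p+\cdots+\xi_p^{q-1})$, which shows ${\bb Z}[\xi_{pq}]/\varphi_p(\xi_{pq})\neq 0$. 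You instead isolate a general principle --- over a reduced ring with finitely many minimal primes and connected spectrum, Laurent units are monomials with unit coefficient --- and reduce everything to the connectedness of $\mathrm{Spec}({\bb Z}[G]/\Sigma_G)$, which you prove by showing that $\mathfrak p_{[\chi]}$ and $\mathfrak p_{[\chi']}$ lie in a common maximal ideal of residue characteristic $\ell$ whenever $\chi(\chi')^{-1}$ has $\ell$-power order (reduction modulo a prime of $\bar{\bb Z}$ over $\ell$ kills $\ell$-power roots of unity), and then chaining through primary components. So your ``same prime'' and ``different primes'' links are the exact analogues of the paper's two steps, but the character-reduction argument replaces the paper's explicit cyclotomic-polynomial computation, and you make the topological content explicit: connectedness of the spectrum (absence of nontrivial idempotents) is precisely what rules out units like $(t,1)$ in a product, whereas the paper's argument is more elementary and self-contained. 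One small point of hygiene: state the general principle for rings with finitely many minimal primes (which is what your argument actually uses); that suffices here, since ${\bb Z}[G]/\Sigma_G$ is Noetherian and its minimal primes are exactly the $\mathfrak p_{[\chi]}$, a fact worth one line of justification (the $\mathfrak p_{[\chi]}$ are pairwise incomparable primes with zero intersection).
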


\begin{proof}
The proof is based on the fact that $R\langle t\rangle^*=R^*\langle t\rangle$ for an integral domain $R$. In other words, the invertibles in the polynomial ring $R\langle t\rangle$ are monomials. In general, ${\bb Z}[G]/\Sigma_G$ is not an integral domain, but can be detected by sufficiently many homomorphisms to integral domains. Specifically, a character $\lambda\colon G\to \langle \xi_n\rangle\sub {\bb C}$ ($\xi_n=e^{\frac{2\pi i}{n}}$ is the $n$-th root of unity and $\langle \xi\rangle$ is all powers of $\xi$) induces a ring homomorphism $\lambda\colon {\bb Z}[G]\to {\bb Z}[\xi_n]={\bb Z}[s]/\varphi_n(s)$, where $\varphi_n$ is the minimal polynomial of $\xi_n$. This further induces a homomorphism $\lambda\colon {\bb Z}[G]/\Sigma_G\to {\bb Z}[\xi_n]$ unless $\lambda$ is trivial. Then we get a homomorphism of invertibles 
\[
\lambda\colon ({\bb Z}[G]/\Sigma_G)\langle t\rangle^* 
\to {\bb Z}[\xi_n]\langle t\rangle^*
={\bb Z}[\xi_n]^*\langle t\rangle.
\]
The equality is due to the fact that ${\bb Z}[\xi_n]$ is an integral domain. Then for an invertible $x=\sum x_it^i$, $x_i\in {\bb Z}[G]$, on the left, we know $\lambda(x)=\sum \lambda(x_i)t^i\in {\bb Z}[\xi_n]^*\langle t\rangle$ means that there is $i(\lambda)$, such that $\lambda(x)=\lambda(x_{i(\lambda)})t^{i(\lambda)}$ (i.e., $\lambda(x_i)=0$ for all $i\ne i(\lambda)$). 

If we take $\lambda$ to be all non-trivial characters, with the corresponding $n=p^k$ being prime powers, then we get an embedding 
\[
{\bb Z}[G]/\Sigma_G
\sub 
\times_{\lambda}{\bb Z}[\xi_{p^k}].
\]
This induces an embedding of invertibles (for the specific selections of $\lambda$)
\[
({\bb Z}[G]/\Sigma_G)\langle t\rangle^*
\sub 
\times_{\lambda}{\bb Z}[\xi_{p^k}]\langle t\rangle^*
=\times_{\lambda}{\bb Z}[\xi_{p^k}]^*\langle t\rangle.
\]
The embedding of invertibles shows that, if we prove that $i(\lambda)$ is independent of the choice of $\lambda$, then the original invertible $x=\sum x_it^i$ on the left is also a monomial. This proves the lemma.

First, by $\varphi_{p^k}(s)=(1-s^{p^k})/(1-s^{p^{k-1}})=1+s^{p^{k-1}}+s^{2p^{k-1}}+\dots+s^{(p-1)p^{k-1}}$, we have a homomorphism
\[
\mu(s)=1\colon
{\bb Z}[\xi_{p^k}]
={\bb Z}[s]/\varphi_{p^k}(s) 
\to 
{\bb F}_p.
\]
Then we have the composition $\mu\circ\lambda\colon {\bb Z}[G]/\Sigma_G\to {\bb F}_p$ that sends every group element to $1$. In particular, the induced map $\mu\circ\lambda\colon ({\bb Z}[G]/\Sigma_G)\langle t\rangle\to {\bb F}_p\langle t\rangle$ depends only on $p$. Then $\mu\circ\lambda(x)=\mu(x_{i(\lambda)})t^{i(\lambda)}$ depends only on $p$. This implies that, if two characters $\lambda$ and $\lambda'$ correspond to the same prime, then $i(\lambda)=i(\lambda')$.

It remains to show that, if $\lambda$ and $\lambda'$ correspond to $\xi_p$ and $\xi_q$, where $p,q$ are distinct primes, then $i(\lambda)=i(\lambda')$. By what we proved above, we only need to verify for any one pair $\lambda$ and $\lambda'$ corresponding to $\xi_p$ and $\xi_q$. Consider a character $\Lambda\colon G\to C=\langle \xi_{pq}\rangle\sub {\bb C}$. Then we have
\[
\begin{CD}
{\bb Z}[G]/\Sigma_G  @. \\
@V{\Lambda}VV \\
{\bb Z}[C]/\Sigma_C={\bb Z}[s]/(\frac{1-s^{pq}}{1-s}) 
@>>> 
{\bb Z}[\xi_{pq}]={\bb Z}[s]/\varphi_{pq}(s) \\
@VVV @VVV \\
{\bb Z}[\xi_p]={\bb Z}[s]/\varphi_p(s)
@>>>
{\bb Z}[\xi_{pq}]/\varphi_p(\xi_{pq})={\bb Z}[s]/(\varphi_{pq}(s),\varphi_p(s))
\end{CD}
\]
The compositions to ${\bb Z}[\xi_p]$ and ${\bb Z}[\xi_{pq}]$ are respectively $\lambda$ and $\Lambda$. Therefore the images of an invertible $x=\sum x_it^i \in ({\bb Z}[G]/\Sigma_G)\langle t\rangle^*$ are respectively $\lambda(x_{i(\lambda)})t^{i(\lambda)}\in {\bb Z}[\xi_p]^*\langle t\rangle$ and $\lambda(x_{i(\Lambda)})t^{i(\Lambda)}\in {\bb Z}[\xi_{pq}]^*\langle t\rangle$. Both are further mapped to $({\bb Z}[\xi_{pq}]/\varphi_p(\xi_{pq}))^*\langle t\rangle$. To show $i(\Lambda)=i(\lambda)$, therefore, we only need to show that ${\bb Z}[\xi_{pq}]/\varphi_p(\xi_{pq})\ne 0$. 

By ${\bb Z}[\xi_{pq}]/\varphi_p(\xi_{pq})={\bb Z}[s]/(\varphi_{pq}(s), \varphi_p(s))$, the ring is $0$ if and only if $1=\varphi_{pq}(s)u(s)+\varphi_p(s)v(s)$ for some polynomials $u(s),v(s)\in {\bb Z}[s]$. Taking $s=\xi_p$, we get $1=\varphi_{pq}(\xi_p)v(\xi_p)$. By
\[
\varphi_{pq}(s)
=\frac{(s^{pq}-1)(s-1)}{(s^p-1)(s^q-1)}
=\frac{1+s^p+s^{2p}+\dots+s^{(q-1)p}}{1+s+s^2+\dots+s^{q-1}},
\]
we have 
\[
\varphi_{pq}(\xi_p)
=\frac{q}{1+\xi_p+\xi_p^2+\dots+\xi_p^{q-1}}.
\]
Then $1=\varphi_{pq}(\xi_p)v(\xi_p)$ means $1+\xi_p+\xi_p^2+\dots+\xi_p^{q-1}=qv(\xi_p)$ is a multiple of $q$ in ${\bb Z}[\xi_p]$. Since $p,q$ are coprime, this is not true. 

We conclude that the ring is non-zero. This implies $i(\Lambda)=i(\lambda)$. Switching $p$ and $q$, we also get $i(\Lambda)=i(\lambda')$. This completes the proof of $i(\lambda)=i(\lambda')$.
\end{proof}

\medskip

\end{document}